\newtheorem{lemma}{Lemma}
\newtheorem{corollary}{Corollary}
\newcommand{\ist}{\hspace*{.3mm}}
\newcommand{\Newrevis}{\textcolor{black}}
\journal{Information Fusion}
\begin{document}

\begin{frontmatter}


\title{\LARGE Arithmetic Average Density Fusion - Part \cal{I}: Some Statistic and Information-theoretic Results} 

 \author[label1]{\large Tiancheng Li}  
 \author[label1]{\large Yan Song}  
 \author[label1]{\large Enbin Song}  
 \author[label1]{\large Hongqi Fan}  
 \address[label1]{Key Laboratory of Information Fusion Technology (Ministry of Education), School of Automation, Northwestern Polytechnical University, Xi'an 710072, China.
 E-mail: t.c.li@nwpu.edu.cn,syzx@mail.nwpu.edu.cn} 
\address[label2]{College of Mathematics, Sichuan University, Chengdu, China, e-mail: e.b.song@163.com}
\address[label3]{Key Laboratory of Science and Technology on ATR, National University of Defense Technology, Changsha 410073, Hunan, China, e-mail: fanhongqi@nudt.edu.cn}

\begin{abstract}
Finite mixture such as the Gaussian mixture is a flexible and powerful probabilistic modeling tool for representing the multimodal distribution widely involved in many estimation and learning problems. The core of it is representing the target distribution by the \textit{arithmetic average} (AA) of a finite number of sub-distributions which constitute a mixture. While the mixture has been widely used for single sensor {filter} design, it is only recent that the AA fusion demonstrates compelling performance for multi-sensor {filter} design.
In this paper, some statistic and information-theoretic results are given on \Newrevis{the covariance consistency, mean square error, mode-preservation capacity, and the information divergence of the AA fusion approach}. In particular, based on the concept of \textit{conservative fusion}, the relationship of the AA fusion with the existing conservative fusion approaches such as \textit{covariance union} and \textit{covariance intersection} 
is exposed. A suboptimal weighting approach has been proposed, which {jointly with the best mixture-fit property of the AA fusion} leads to a max-min optimization problem. 
Linear Gaussian models are considered for algorithm illustration and simulation comparison, resulting in the first-ever AA fusion-based multi-sensor Kalman filter. 
\end{abstract}

\begin{keyword}
Finite mixture model, conservative fusion, arithmetic average fusion, 
covariance intersection, covariance union.
\end{keyword}

\end{frontmatter}

\section{Introduction}



T{he} last two decades have witnessed a steady uptick in the application
of the linear information fusion approach such as the averaging operation to the
{multi-sensor} estimation problem \citep{Sayed14book,Li22chapter,Li23BM}. 
Two 
common types of estimator information that need to be averaged are variables and density functions. 
They 
correspond to the two classes of estimators: the point estimator such as the most known Kalman filter of which the estimate is given in terms of a point estimate associated with covariance and the density estimator such as {the particle filter of which the estimate is given in terms of a density function that approximates the Bayesian posterior}. Both types of fusion {have} seen substantial interest for multi-agent collaboration thanks to the vitalization of networked sensors/systems \citep{Sayed14book,Javadi20RadarNet,Li22chapter,Li23BM}. 
Fusing densities is superior in the sense that a full description about the unknown variable of interest is utilized
rather than some statistics of it. It is
also compatible with heterogeneous data models across the agents \citep{Li17PC,Li17PCsmc,Kayaalp22}. 
While the averaging of variables results in a variable of nice statistical property \citep{Li19Second}, 
the averaging of distributions leads to a finite mixture distribution (FMD). 
In the FMD, mixands are properly weighted to 
jointly approximate the target probability distribution $p(\mathbf{x})$ by their arithmetic average (AA):
\begin{equation}\label{eq:defAA}
  f_\text{AA}(\mathbf{x}) \triangleq \sum_{i \in \mathcal{I}} {w_i}f_i(\mathbf{x}) \ist,
\end{equation}
where $\mathbf{x}$ denotes the state of interest, $\mathbf{w} \triangleq [w_1, w_2, ...,w_I]^\mathrm{T}$ are non-negative, normalized mixing/fusing weights, and $f_i(\mathbf{x})$ are {some distribution functions} (of the same family or not), e.g., the
probability density function (PDF) and probability hypothesis density function \citep{Li22RFS-AA-Derivation} 
yielded by a finite set of estimators $i\in \mathcal{I}=\{1,2,\cdots,I\}$ conditioned on different observations, models or hypotheses. 

%

{The need for FMD{, which can be dated back to \citep{Pearson1894},} 
commonly arises in dealing with the uncertainties due to
multi-modal data/noise \citep{Burnham02}, 
stochastically switched models of the real state \citep{Sylvia06}, 
and multiple objects \citep{Vo15mtt}.} 
{It} facilitates the recursive Markov-Bayesian filtering calculation greatly in two means: First, a mixture of conjugate priors is also conjugate and can approximate any kind of prior \citep{Dalal83mixturePrior,Diaconis83mixturePrior}. Second, the linear fusion of a finite number of mixtures of the same parametric family remains a mixture of that family. These properties play a key role in the mixture filters such as the {celebrated}
Gaussian mixture (GM) filter, 
and multi-Bernoulli mixture filters \citep{Vo13Label,Williams15taesPMBM,Angel18PMBMdeivation}. 
In fact, {as one of the most known opinion pools since \citep{Stone61LOP}}, FDM 
has also been widely used in machine learning \citep{McLachlan00} {such as data clustering}. 
{In spite of these,} 
it is only recent that the linear AA fusion has been intensively used for
multi-sensor {distribution fusion based on random set filters (see the review given in \citep{Li22chapter} and \citep{Li22RFS-AA-Derivation})} 
which has been proven highly efficient in computation, robust/tolerant to
individual sensor misdetection, and insensitive to internode correlation \citep{Bailey12}. 

{In the context of multi-sensor density fusion, each sensor computes a density \Newrevis{such as the Bayesian posterior} based on its own observations and shares it with its inter-connected neighbors 
so that these densities can be fused with weights in an appreciate means for more accurate and robust estimate. In particular, the AA density fusion is well suited for distributed implementation, which is advantageous as compared with the centralized fusion.}
\Newrevis{Recently, the rationale and nice properties of the AA fusion as well as some other density fusion approaches have been further analyzed in \citep{Koliander22} and extension to fusion of {distributions over unknown quantities of interest} and of soft-decisions has been made in \citep{Kayaalp22} and \citep{Kayaalp23}, respectively. Yet, it is so far unknown what happens when this fundamental fusion method meets the benchmark Kalman/partical filter.}
Nevertheless, theoretical study on the AA density fusion approach 
is still short in two aspects, which motivate our work.
\begin{enumerate}
  \item First, while the concept of conservative fusion \citep{Uhlmann95,Julier01,Uhlmann03,Julier05cu,Reece05,Julier06,Bochardt06,Ajgl14,Lubold19formal,Noack17ICI,Wang21cu} 
      has been well accepted, the AA fusion 
gains the mentioned advantages at the price of an inflated covariance which seems at variant with the minimum variance estimator/fusion. This unavoidably raises a concern on the accuracy of the estimator if one simply swaps the inflated variance with increased mean square error (MSE). 
  We in this paper clarify their fundamental difference and provide a comprehensive analysis of the statistics of the AA fusion and its connection with existing conservative fusion approaches {including a direct comparison of their fused covariances. \Newrevis{It is for the first time shown how the AA density fusion reduces the MSE and demonstrates consistency under certain conditions}. Moreover, we point out an important property of the AA fusion in preserving the modes of the fusing estimators.} 
  \item Second, while the AA fusion has been well tailored for various multiple-object filter fusion (see the review give in \citep{Li22chapter} and \citep{Li22RFS-AA-Derivation}), it remains unclear how the mixture as a whole compares with the mixands (especially the \textit{best} one) and how the fusing weights should be designed proactively in order to maximize the fusion gain {in the context of multi-sensor filter fusion}. 
We provide information-theoretic results to answer these questions, \Newrevis{demonstrating how} the AA fits the target distribution better than the average of the fusing estimators and even than the best one.
\end{enumerate}

These statistic and information-theoretic findings are expected to underpin the use of the FMD and AA fusion approach. Empirical analysis and simulation study based on the most known Gaussian distributions and the benchmark Kalman and particle filters are provided to demonstrate the theoretical results. \Newrevis{Based on these theoretical results, we propose} the first-ever AA fusion-based multi-sensor Kalman and particle filters which are computationally faster and yield the comparable estimate accuracy as the state of the art.
Some results have been directly used for Student's $t$ density fusion in the presence of outlier \citep{Li22StudentAA} while a unified derivation of the AA fusion for multi-sensor random finite set filters is provided and realized in \Newrevis{the companion papers
\citep{Li22RFS-AA-Derivation,Li23Heterogeneous,Li23HeterVA}. This paper serves as the first part of a series of papers that provide a comprehensive and  thorough study of the AA fusion methodology and its application for target tracking.}

The remainder of this paper is organized as follows. Representative conservative fusion approaches are analyzed and compared in section \ref{sec:statistics}, highlighting the 
\Newrevis{statistic properties}
of the AA fusion approach. 
The divergence of the mixture from the target distribution is studied in section \ref{sec:information-theoretics}, providing an information-theoretic criterion for the AA fusion. Furthermore, we propose a suboptimal approach to fusing weight design. Comprehensive comparison of these conservative fusion approaches in the context of target tracking is presented in section \ref{sec:simulation} before the paper is concluded in section \ref{sec:conclusion}. 



\section{Conservative Fusion and Statistics of AA Fusion} \label{sec:statistics}

In the context of time-series estimation, optimality such as minimum MSE (MMSE) and minimized Bayes risk \citep{Kay93} is desirable in different classes of optimal estimators: MMSE point estimator and Bayes-optimal density estimator. There is a key difference between two optimal criterion: the former relies on the statistics of the estimator such as the mean and variance \citep{Li03OptimalLinear}, 
and the latter on information measure of the posterior for which a proper distribution-oriented metric such as the Kullback Leibler (KL) divergence is useful. 
Note that 
{standard} statistics such as the mean and variance 
do not apply to the multi-target estimator. In this section, we limit analysis with respect to the single target only.

\Newrevis{In what follows, we first give the main notations, concepts and definitions used in this paper. Based on this, we compare the AA fusion method with the covariance union (CU) method and further with the covariance intersection (CI) method and its extension. Then, we analyze the MSE of the AA density fusion, demonstrating its consistency under certain conditions. Finally, we point out and illustrate the mode-preservation capacity of the AA fusion in preserving the modes of the fusing estimators.} 

\subsection{Notations, Concepts and Definitions}
In the following, we use $\mathbf{x} \in \mathbb{R}^{d}$ to denote the $d$-dimensional state of the target which is 
a random quantity to be estimated, namely the real/true state.
\Newrevis{Based on the Bayesian viewpoint,} we use $p(\mathbf{x})$ to denote the corresponding PDF, namely the probability distribution of $\mathbf{x}$. For a given Bayesian posterior $f(\mathbf{x})$ 
{conditioned on real-time observations}\footnote{Hereafter, we ignore the dependence on the random observations in the notation for simplicity unless when the random nature of the observations is addressed.
{This makes sense because given the observation at any particular time \Newrevis{for fusion}, what is fused is the specific posterior/density obtained at that time.} 
}, from which the 
state estimate can be attained in either of two common ways, namely the expected a posteriori (EAP) and maximum a posteriori (MAP) estimators as follows
\begin{align}
\hat{\mathbf{x}}^\mathrm{EAP} &= \int_{\mathbb{R}^{d}} \mathbf{\tilde{x}} f(\mathbf{\tilde{x}}) d \mathbf{\tilde{x}} \ist,  \label{eq:eap} \\
\hat{\mathbf{x}}^\mathrm{MAP} &= \mathop{\arg\sup}\limits_{\mathbf{\tilde{x}}\in \mathbb{R}^{d}} f(\mathbf{\tilde{x}}) \ist. \label{eq:map}
\end{align}
That is, in the EAP estimator, the state estimate is given as the mean of the posterior while in the MAP the state estimate is given as the peak/mode of the posterior distribution. 

We consider a number of estimate pairs, each composed of a state estimate $\hat{\mathbf{x}}_i$ and an associated positive-definite error covariance matrix $\mathbf{P}_i$, $ i \in \mathcal{I}$, which are to be fused using weights $\mathbf{w} \triangleq \{w_1,w_2,...,w_I\}$, where $w_i \geq 0, \mathbf{w}^\mathrm{T}\mathbf{1}_I = 1, \forall i \in \mathcal{I}$. Here $\mathbf{1}_I $ is the all-ones column vector of dimension $I$. 
Hereafter, $\hat{\mathbf{x}}_i$ is given by the EAP estimator unless otherwise stated. Then, each estimate pair corresponds to the first and second moments of the posterior PDF $f_i(\mathbf{x})$ which is an estimate of the real distribution $p(\mathbf{x})$, i.e., $\hat{\mathbf{x}}_i = \int_{\mathbb{R}^{d}} \mathbf{\tilde{x}} f_i(\mathbf{\tilde{x}}) d \mathbf{\tilde{x}}$, $\mathbf{P}_i= \int_{\mathbb{R}^{d}} (\mathbf{\tilde{x}}-\hat{\mathbf{x}}_i )(\cdot)^\mathrm{T} f_i(\mathbf{\tilde{x}}) d \mathbf{\tilde{x}}$ \footnote{Note that a Gaussian PDF can be uniquely determined by an estimate pair. Fusion in terms of only the mean and variance implicitly imposes Gaussian assumption. These being said, the AA density fusion is by no means limited to any specific distributions or finite moments of the distribution.}. Hereafter, we use the shorthand writing $(\mathbf{x}-\mathbf{y})(\cdot)^\mathrm{T} \triangleq (\mathbf{x}-\mathbf{y})(\mathbf{x}-\mathbf{y})^\mathrm{T}$.
The MSE of $\hat{\mathbf{x}}_i$ (conditional on the given observations of sensor $i$) is defined as \footnote{{In the Bayesian viewpoint, both the real state and the observations are random and so the complete expression of the MSE of $\hat{\mathbf{x}}_i \in \mathbb{R}^{d}$ given observation $\mathbf{y} \in \mathbb{R}^{d_y}$ can be given by
\begin{equation}\nonumber
  \Newrevis{\mathrm{CompleteMSE}}_{\hat{\mathbf{x}}_i} \triangleq \int_{\mathbb{R}^{d_y}} \int_{\mathbb{R}^{d}} (\mathbf{{x}}-\hat{\mathbf{x}}_i)(\cdot)^\mathrm{T}p(\mathbf{{x}},\mathbf{{y}})d\mathbf{{x}}d\mathbf{{y}} .
\end{equation}
where $p(\mathbf{{x}},\mathbf{{y}})$ is the joint distribution of the real state $\mathbf{x}$ and the observation $\mathbf{y}$ and $\hat{\mathbf{x}}_i$ is conditional on $\mathbf{y}$.}}
\begin{equation}\label{eq:MSE-def}
  \mathrm{MSE}_{\hat{\mathbf{x}}_i} \triangleq \mathrm{E}_p [(\mathbf{x}-\hat{\mathbf{x}}_i)(\cdot)^\mathrm{T}] = \int_{\mathbb{R}^{d}} (\mathbf{{x}}-\hat{\mathbf{x}}_i)(\cdot)^\mathrm{T}p(\mathbf{{x}})d\mathbf{{x}} \ist.
\end{equation}


\textit{Definition 1 (Conservative)}. An estimate pair ($\hat{\mathbf{x}},\mathbf{P}_{\hat{\mathbf{x}}}$) regarding the real state $\mathbf{x}$, is deemed conservative  \citep{Uhlmann95,Uhlmann03,Julier06,Bochardt06} when
\begin{equation}
\mathbf{P}_{\hat{\mathbf{x}}} \succeq \mathrm{MSE}_{\hat{\mathbf{x}}} \ist.
\end{equation}
That is, $\mathbf{P}_{\hat{\mathbf{x}}} - \mathrm{E}_p [(\mathbf{x}-\hat{\mathbf{x}})(\cdot)^\mathrm{T}]$ is positive (semi-)definite.

The notion is also referred to as \textit{covariance consistent} and as pessimistic definite \citep{XLi06}. Extended definition of the conservativeness of PDFs can be found in \citep{Ajgl14,Lubold19formal}.

%

With respect to the type of data, there are two forms of AA fusion as follows.

\textit{Definition 2 (AA $v$-fusion)}. In a point estimation problem, the AA $v$-fusion is carried out with regard to these state estimate variables $\hat{\mathbf{x}}_i$, $ i \in \mathcal{I}$ which yields a new variable 
\begin{equation}\label{eq:AA-v-fusion}
  \hat{\mathbf{x}}_\mathrm{AA} = \sum_{i \in \mathcal{I}} w_i\hat{\mathbf{x}}_i \ist.
\end{equation}

\textit{Definition 3 (AA $f$-fusion)}. In the Bayesian formulation \Newrevis{as the concern of this paper}, the estimation problem is to find a distribution that best fits $p(\mathbf{x})$. The corresponding AA $f$-fusion is carried out with regard to $f_i(\mathbf{x})$, $ i \in \mathcal{I}$ which yields a mixture of these fusing distributions, a FMD 
\begin{equation}\label{eq:AA-f-fusion}
  f_\mathrm{AA}(\mathbf{x}) = \sum_{i \in \mathcal{I}} w_if_i(\mathbf{x}) \ist.
\end{equation}

{\textit{Definition 4 (Naive fusion)}. For a set of independent estimate pairs $(\hat{\mathbf{x}}_i,\mathbf{P}_i)$, $i \in \mathcal{I}$, the naive fusion (NF) which is also referred to as the convex combination \citep{Chong00T-fusion} is given by
\begin{align}
\hat{\mathbf{x}}_\mathrm{naive} &=  \mathbf{P}_\mathrm{naive} \sum_{i \in \mathcal{I}} \mathbf{P}_i^{-1} \hat{\mathbf{x}}_i \ist, \label{eq:naive-x}\\
\mathbf{P}_\mathrm{Naive} & = \Big(\sum_{i \in \mathcal{I}}\mathbf{P}_i^{-1}\Big)^{-1} \ist,\label{eq:naive-P}
\end{align}
which corresponds to the product of Gaussian PDFs with mean $\hat{\mathbf{x}}_i$ and covariance $\mathbf{P}_i, i \in \mathcal{I}$, respectively, i.e., $\mathcal{N}(\mathbf{x}; \hat{\mathbf{x}}_\mathrm{naive}, \mathbf{P}_\mathrm{Naive}) = \prod_{i \in \mathcal{I}}\mathcal{N}_i(\mathbf{x};\hat{\mathbf{x}}_i,\mathbf{P}_i)$.} 

It is often too ideal to assume independence among the fusing estimators especially when an inter-connect sensor network is considered. 
Then, multi-sensor optimal fusion in the sense of whether MMSE or Bayes
needs to quantify exactly the cross-correlation among the sensors \citep{Chong90,Li03OptimalLinear,Sun20OLF}. 
Unfortunately, this often turns out to be impractical/inconcervable due to the complicated, latent correlation among sensors/agents and so one may resort to suboptimal, correlation-insensitive solutions \citep{Uhlmann95,Julier01,Uhlmann03,Julier05cu,Reece05,Julier06,Bochardt06,Noack17ICI,Wang21cu} including the AA fusion. 

\subsection{\Newrevis{Comparison between CU and AA}}
%
Based on the concept of conservativeness, there are a number of results as given in the following Lemmas.
 \begin{lemma} \label{lemma_PCU}
 For a set of estimate pairs $(\hat{\mathbf{x}}_i,\mathbf{P}_i)$, $i \in \mathcal{I}$ in which at least one is conservative, a sufficient condition for the fused estimate pair ($\hat{\mathbf{x}}_\mathrm{AA},\mathbf{P}_\mathrm{CU}$) to be conservative is that
 \begin{equation} \label{eq:CU_P}
 \mathbf{P}_\mathrm{CU} \succeq \mathbf{P}_i + (\hat{\mathbf{x}}_\mathrm{AA}-\hat{\mathbf{x}}_i)(\cdot)^\mathrm{T}, \hspace{1mm} \forall i \in \mathcal{I} \ist,
 \end{equation}
for which a tight bound is given by
\footnote{We note that two matrixes may not be comparable for which we use $\mathbf{A} \succ \mathbf{B}$ if $\mathrm{Tr}(\mathbf{A}) > \mathrm{Tr}(\mathbf{B})$, where $\mathrm{Tr}(\mathbf{A})$ calculates the trace (or the determinant) of matrix $\mathbf{A}$. For two incomparable matrixes $\mathbf{A}, \mathbf{B}$ with the same trace (or the same determinant), one may further use $\mathbf{A} = \max(\mathbf{A}, \mathbf{B})$. }
 \begin{equation} \label{eq:CU_P-max}
 \mathbf{P}_\mathrm{CU} \succeq \mathbf{P}_\mathrm{CU}^{u}  \triangleq \max_{i \in \mathcal{I}} \big(\mathbf{P}_i + (\hat{\mathbf{x}}_\mathrm{AA}-\hat{\mathbf{x}}_i)(\cdot)^\mathrm{T}\big) \ist.
 \end{equation}
 \end{lemma}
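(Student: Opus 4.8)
The plan is to single out one conservative pair, say $(\hat{\mathbf{x}}_j,\mathbf{P}_j)$ with $\mathbf{P}_j \succeq \mathrm{MSE}_{\hat{\mathbf{x}}_j}$ guaranteed by the hypothesis, and to bound $\mathrm{MSE}_{\hat{\mathbf{x}}_\mathrm{AA}}$ through it. First I would insert $\hat{\mathbf{x}}_j$ into the fused error, writing $\mathbf{x}-\hat{\mathbf{x}}_\mathrm{AA} = (\mathbf{x}-\hat{\mathbf{x}}_j) + (\hat{\mathbf{x}}_j-\hat{\mathbf{x}}_\mathrm{AA})$, and expand the outer product inside the expectation defining $\mathrm{MSE}_{\hat{\mathbf{x}}_\mathrm{AA}}$. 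Since $\hat{\mathbf{x}}_j-\hat{\mathbf{x}}_\mathrm{AA}$ is deterministic given the observations, this yields the identity
\begin{equation}
\mathrm{MSE}_{\hat{\mathbf{x}}_\mathrm{AA}} = \mathrm{MSE}_{\hat{\mathbf{x}}_j} + \mathbf{m}_j(\hat{\mathbf{x}}_j-\hat{\mathbf{x}}_\mathrm{AA})^\mathrm{T} + (\hat{\mathbf{x}}_j-\hat{\mathbf{x}}_\mathrm{AA})\mathbf{m}_j^\mathrm{T} + (\hat{\mathbf{x}}_\mathrm{AA}-\hat{\mathbf{x}}_j)(\cdot)^\mathrm{T},
\end{equation}
where $\mathbf{m}_j := \mathrm{E}[\mathbf{x}-\hat{\mathbf{x}}_j]$ is the estimation bias of the $j$-th estimator.

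The crux, and the step I expect to be the main obstacle, is the pair of cross terms carrying $\mathbf{m}_j$: the symmetric matrix $\mathbf{m}_j(\hat{\mathbf{x}}_j-\hat{\mathbf{x}}_\mathrm{AA})^\mathrm{T} + (\cdot)^\mathrm{T}$ is in general indefinite, so it cannot simply be dropped, and a naive bound would fail. I would resolve this by invoking the (standard, and here implicit) mean-consistency of the conservative estimator, $\mathbf{m}_j = \mathbf{0}$, which is exactly the premise under which the EAP estimate reproduces the true mean and under which Definition~1 reduces to the usual requirement that $\mathbf{P}_j$ dominate the true error covariance. With $\mathbf{m}_j=\mathbf{0}$ the identity collapses to
\begin{equation}
\mathrm{MSE}_{\hat{\mathbf{x}}_\mathrm{AA}} = \mathrm{MSE}_{\hat{\mathbf{x}}_j} + (\hat{\mathbf{x}}_\mathrm{AA}-\hat{\mathbf{x}}_j)(\cdot)^\mathrm{T} \preceq \mathbf{P}_j + (\hat{\mathbf{x}}_\mathrm{AA}-\hat{\mathbf{x}}_j)(\cdot)^\mathrm{T},
\end{equation}
the inequality being the conservativeness of pair $j$ together with the fact that adding a common rank-one PSD term preserves the Löwner order. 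Chaining this with the hypothesis \eqref{eq:CU_P} at $i=j$ gives $\mathrm{MSE}_{\hat{\mathbf{x}}_\mathrm{AA}} \preceq \mathbf{P}_\mathrm{CU}$, i.e.\ the fused pair is conservative.

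Because the identity of the conservative pair is unknown a priori, the bound must hold simultaneously for every candidate, which is exactly why \eqref{eq:CU_P} is imposed for all $i\in\mathcal{I}$; the argument above then certifies conservativeness no matter which single pair turns out to be the conservative one. Finally, to justify the tight choice \eqref{eq:CU_P-max}, I would argue minimality: any admissible $\mathbf{P}_\mathrm{CU}$ must dominate each $\mathbf{P}_i+(\hat{\mathbf{x}}_\mathrm{AA}-\hat{\mathbf{x}}_i)(\cdot)^\mathrm{T}$, so the smallest admissible covariance is their maximum. Since two covariances need not be Löwner-comparable, I would carry out this last step in the trace (or determinant) order introduced in the footnote, under which the finite family $\{\mathbf{P}_i+(\hat{\mathbf{x}}_\mathrm{AA}-\hat{\mathbf{x}}_i)(\cdot)^\mathrm{T}\}_{i\in\mathcal{I}}$ admits a well-defined maximum, delivering $\mathbf{P}_\mathrm{CU}^{u}$ as the tightest conservative union covariance.
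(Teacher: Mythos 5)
There is nothing in the paper to compare against line by line: the paper does not prove this lemma, it defers entirely to \cite{Li17PC}. Your argument is the standard covariance-union consistency proof found in that literature, and it is structurally the right one: insert the (unknown) conservative estimate $\hat{\mathbf{x}}_j$ into the fused error, expand the outer product, dispose of the cross terms, and then use $\mathbf{P}_j \succeq \mathrm{MSE}_{\hat{\mathbf{x}}_j}$ together with the fact that adding the common term $(\hat{\mathbf{x}}_\mathrm{AA}-\hat{\mathbf{x}}_j)(\cdot)^\mathrm{T}$ preserves the L\"{o}wner order, chained with \eqref{eq:CU_P} at $i=j$. Your observation that the condition must be imposed for all $i\in\mathcal{I}$ because the identity of the conservative pair is unknown is exactly the point of CU.

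The one step that needs repair is the one you call the crux: setting $\mathbf{m}_j=\mathbf{0}$ is \emph{not} implicit in Definition~1. That definition only requires $\mathbf{P}_j \succeq \mathrm{E}[(\mathbf{x}-\hat{\mathbf{x}}_j)(\cdot)^\mathrm{T}]$, which a biased estimator can satisfy by inflating $\mathbf{P}_j$, and for such a biased-but-conservative estimator the lemma as literally stated fails. Scalar counterexample: let $p$ have mean $0$ and variance $1$; take $\hat{x}_1=1$, $P_1=2$ (conservative, since $\mathrm{MSE}_{\hat{x}_1}=1+1=2$), $\hat{x}_2=3$, $P_2=0.01$, $w_1=w_2=1/2$, so $\hat{x}_\mathrm{AA}=2$. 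Then $P_\mathrm{CU}=\max(P_1+1,\,P_2+1)=3$ satisfies \eqref{eq:CU_P} and equals \eqref{eq:CU_P-max}, yet $\mathrm{MSE}_{\hat{x}_\mathrm{AA}}=1+4=5>3$: the fused pair is not conservative, precisely because the cross term $\mathbf{m}_j\mathbf{d}_j^\mathrm{T}+\mathbf{d}_j\mathbf{m}_j^\mathrm{T}$ you identified is negative here. So zero bias is a genuinely additional hypothesis (the analogue of the conditional unbiasedness \eqref{eq:unbiasedness} that the paper states explicitly only later, for the MSE lemma), and your write-up should present it as such---ideally with a counterexample like the above---rather than as something already contained in Definition~1. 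With that hypothesis added, your proof is correct. A final minor remark: your tightness argument for \eqref{eq:CU_P-max} inherits the paper's own looseness---the trace-maximal member of the family need not dominate the others in the L\"{o}wner order, so conservativeness requires the L\"{o}wner inequalities \eqref{eq:CU_P} while ``tightness'' is only meaningful in the weaker trace/determinant order of the footnote (the rigorous version is the semidefinite program minimizing $\mathrm{Tr}(\mathbf{P}_\mathrm{CU})$ subject to \eqref{eq:CU_P}); you mirror the paper here, so this is not a fault of your proof so much as of the statement.
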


Proof for {\eqref{eq:CU_P} can be found in \citep{Li17PC} where an implicit assumption is made as $\mathbf{P}_i$ is uncorrelated with $\hat{\mathbf{x}}_\mathrm{AA}$}. It actually provides a conservative fusion method which is known as covariance union (CU) \citep{Uhlmann03,Julier05cu,Wang21cu}. It is deemed \textit{fault tolerant} as it preserves covariance consistency as long as at least one fusing estimator is conservative. When all fusing estimators are conservative, i.e., $\mathbf{P}_i \succeq \mathrm{E}_p [ (\mathbf{x}-\hat{\mathbf{x}}_i)(\cdot)^\mathrm{T}], \hspace{1mm} \forall i \in \mathcal{I}$, one has the following result

\begin{corollary} \label{corollary_PCU-l}
For a set of conservative estimate pairs $(\hat{\mathbf{x}}_i,\mathbf{P}_i)$, $i \in \mathcal{I}$, a sufficient condition for the fused estimate pair ($\hat{\mathbf{x}}_\mathrm{AA},\mathbf{P}_\mathrm{CU}^{l}$) to be conservative is given by a lower bound as compared with \eqref{eq:CU_P-max}
 \begin{equation} \label{eq:CU_P-min}
 \mathbf{P}_\mathrm{CU} \succeq \mathbf{P}_\mathrm{CU}^{l}  \triangleq \min_{i \in \mathcal{I}} \big(\mathbf{P}_i + (\hat{\mathbf{x}}_\mathrm{AA}-\hat{\mathbf{x}}_i)(\cdot)^\mathrm{T}\big) \ist.
 \end{equation}
\end{corollary}

\begin{lemma} \label{lemma_AAmeanVar}
For Gaussian distributions $f_i(\mathbf{x}) = \mathcal{N}(\mathbf{x}; \hat{\mathbf{x}}_i,\mathbf{P}_i)$, $ i \in \mathcal{I} $, the AA $f$-fusion \eqref{eq:AA-f-fusion} results in a FMD for which the mean and covariance are respectively given by \eqref{eq:AA-v-fusion} and
\begin{align}
\mathbf{P}_\mathrm{AA} &= \sum_{i \in \mathcal{I}} w_i\tilde{\mathbf{P}}_i \ist, \label{eq:AA-f-P}
\end{align}
where the adjusted covariance matrix is given by 
\begin{equation} \label{eq:CovAdjut}
\tilde{\mathbf{P}}_i \triangleq \mathbf{P}_i + (\hat{\mathbf{x}}_\mathrm{AA}-\hat{\mathbf{x}}_i)(\cdot)^\mathrm{T} \ist.
\end{equation}

 \end{lemma}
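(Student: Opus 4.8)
The plan is to verify the two moment identities directly from the definition of the mixture $f_\mathrm{AA}(\mathbf{x}) = \sum_{i \in \mathcal{I}} w_i f_i(\mathbf{x})$, exploiting the linearity of integration together with the already-established facts that each component $f_i$ has mean $\mathbf{\hat{x}}_i$ and covariance $\mathbf{P}_i$. It is worth noting up front that Gaussianity plays no essential role here: only the first two moments of the components enter the computation, so the same argument applies to any mixture of distributions with finite second moments, consistent with the remark that AA density fusion is not tied to a specific parametric family.

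First I would establish the mean. Inserting the mixture into the definition of the posterior mean and pulling the finite sum outside the integral gives $\int \mathbf{\tilde{x}} f_\mathrm{AA}(\mathbf{\tilde{x}}) d\mathbf{\tilde{x}} = \sum_{i \in \mathcal{I}} w_i \int \mathbf{\tilde{x}} f_i(\mathbf{\tilde{x}}) d\mathbf{\tilde{x}} = \sum_{i \in \mathcal{I}} w_i \mathbf{\hat{x}}_i = \mathbf{\hat{x}}_\mathrm{AA}$, which is exactly \eqref{eq:AA-v-fusion}. This step is immediate.

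The covariance is the crux, though still routine. Writing $\mathbf{P}_\mathrm{AA} = \int (\mathbf{\tilde{x}} - \mathbf{\hat{x}}_\mathrm{AA})(\cdot)^\mathrm{T} f_\mathrm{AA}(\mathbf{\tilde{x}}) d\mathbf{\tilde{x}}$ and again exchanging sum and integral reduces the task to evaluating the per-component integral $\int (\mathbf{\tilde{x}} - \mathbf{\hat{x}}_\mathrm{AA})(\cdot)^\mathrm{T} f_i(\mathbf{\tilde{x}}) d\mathbf{\tilde{x}}$. The key manoeuvre is to recenter each component about its own mean via the decomposition $\mathbf{\tilde{x}} - \mathbf{\hat{x}}_\mathrm{AA} = (\mathbf{\tilde{x}} - \mathbf{\hat{x}}_i) + (\mathbf{\hat{x}}_i - \mathbf{\hat{x}}_\mathrm{AA})$. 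Expanding the outer product produces four terms: the two cross terms each carry the factor $\int (\mathbf{\tilde{x}} - \mathbf{\hat{x}}_i) f_i(\mathbf{\tilde{x}}) d\mathbf{\tilde{x}} = \mathbf{0}$ and hence vanish; the first term integrates to the component covariance $\mathbf{P}_i$; and the last term is the constant rank-one matrix $(\mathbf{\hat{x}}_i - \mathbf{\hat{x}}_\mathrm{AA})(\cdot)^\mathrm{T}$ multiplied by $\int f_i(\mathbf{\tilde{x}}) d\mathbf{\tilde{x}} = 1$. Using the sign symmetry $(\mathbf{\hat{x}}_i - \mathbf{\hat{x}}_\mathrm{AA})(\cdot)^\mathrm{T} = (\mathbf{\hat{x}}_\mathrm{AA} - \mathbf{\hat{x}}_i)(\cdot)^\mathrm{T}$, each per-component integral collapses to $\tilde{\mathbf{P}}_i$ of \eqref{eq:CovAdjut}, and weighting by $w_i$ and summing over $\mathcal{I}$ yields \eqref{eq:AA-f-P}.

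I do not anticipate a genuine obstacle: this is essentially the law of total variance specialized to a finite mixture. The only points requiring minor care are the vanishing of the two cross terms, which rests solely on each $f_i$ being centered at $\mathbf{\hat{x}}_i$, and the reading of the shorthand $(\cdot)^\mathrm{T}$, so that the sign flip between $\mathbf{\hat{x}}_i - \mathbf{\hat{x}}_\mathrm{AA}$ and $\mathbf{\hat{x}}_\mathrm{AA} - \mathbf{\hat{x}}_i$ in the rank-one correction is correctly seen to be immaterial.
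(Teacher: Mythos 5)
Your proof is correct, and it is more self-contained than the paper's. The paper proves the mean identity exactly as you do (linearity of the integral over the finite sum), but for the covariance formula \eqref{eq:AA-f-P} it does not give a derivation at all: it cites Appendix B of an earlier reference, which treats the case of two Gaussian components, and asserts that the argument "can be easily extended" to any finite number of components. Your recentering decomposition $\mathbf{\tilde{x}} - \mathbf{\hat{x}}_\mathrm{AA} = (\mathbf{\tilde{x}} - \mathbf{\hat{x}}_i) + (\mathbf{\hat{x}}_i - \mathbf{\hat{x}}_\mathrm{AA})$, with the cross terms vanishing because each $f_i$ is centered at $\mathbf{\hat{x}}_i$, supplies exactly the missing computation, and it handles all $I$ components in one pass rather than requiring a two-component base case plus an extension. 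It also makes explicit something the paper only gestures at in a footnote: Gaussianity is irrelevant, since only the first two moments of each mixand enter, so the lemma holds for any mixture of distributions with finite second moments (it is the law of total variance for a finite mixture). What the paper's route buys is brevity and a pointer to prior work; what yours buys is a complete, verifiable argument at the natural level of generality. No gaps.
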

 \begin{proof} First, $
\hat{\mathbf{x}}_\mathrm{AA} 
= \int_{\mathbb{R}^{d}} \mathbf{\tilde{x}} \sum_{i \in \mathcal{I}} w_if_i(\mathbf{\tilde{x}}) d \mathbf{\tilde{x}}
= \sum_{i \in \mathcal{I}} w_i\hat{\mathbf{x}}_i$.
Proof of \eqref{eq:AA-f-P} for fusing two Gaussian distributions can be found in Appendix B of \citep{Li20AAmb}, which {is straightforward to be} extended to any finite number of fusing distributions.
 \end{proof}
The Appendix B of \citep{Li20AAmb} further showed that the above results $\hat{\mathbf{x}}_\mathrm{AA}$ and $\mathbf{P}_\mathrm{AA}$ correspond to the first and second moments of the resulting Gaussian distribution by merging \citep{Salmond09} all mixands in the mixture. {We note that hereafter, the component merging means the method proposed by \citep{Salmond09} unless otherwise stated.} In fact, the Gaussian PDF that best fits the AA mixture has the same first and second moments \citep[Theorem 2]{Runnalls07}, i.e., 
\begin{equation}\label{eq:best-fit-GM-Gaussian}
  (\hat{\mathbf{x}}_\mathrm{AA}, \mathbf{P}_\mathrm{AA})= \mathop{\arg\min}\limits_{(\mathbf{\mu},\mathbf{P})}D_\text{KL}\big( f_\text{AA}\| \mathcal{N}(\mathbf{\mu},\mathbf{P})\big) \ist,
\end{equation}
where 
${D_{{\rm{KL}}}}(f||p) \triangleq \int_{\mathbb{R}^{d}} {f(\mathbf{x})\log \frac{f(\mathbf{x})}{p(\mathbf{x})}d \mathbf{x}}$ denotes the KL divergence of the probability distribution $p(\mathbf{x})$ relative to $f(\mathbf{x})$. 



\Newrevis{
\begin{lemma} Fusing the same group of estimate pairs, the AA fusion (using weights smaller than unit) is less conservative in comparison with the CU fusion in the following sense
\begin{equation}\label{eq:AAvsCU}
  \mathbf{P}_\mathrm{AA} \preceq \mathbf{P}_\mathrm{CU} \ist,
\end{equation}
where the equation holds if and only if (iff) $\tilde{\mathbf{P}}_i = \tilde{\mathbf{P}}_j, \forall i \neq j$.
\end{lemma}
\begin{proof}
The result is straightforward from \eqref{eq:CU_P} and \eqref{eq:AA-f-P}.
\end{proof}
}

\subsection{Some Other Conservative Fusion Approaches}

In contrast to the AA fusion \eqref{eq:AA-f-fusion}, the geometric average (GA) of the fusing sub-PDFs $f_i(\mathbf{x})$ is given as
\begin{equation}\label{eq:GA-f-fusion}
  f_\mathrm{GA}(\mathbf{x}) = C^{-1}\prod_{i \in \mathcal{I}}f_i^{w_i}(\mathbf{x}) \ist,
\end{equation}
where $C\triangleq \big(\int_{\mathbb{R}^{d}} \prod_{i \in \mathcal{I}}f_i^{w_i}(\mathbf{\tilde{x}}) d\mathbf{\tilde{x}}\big)^{-1}$ is the normalization constant.

Obviously, the GA fusion is a log-linear fusion, i.e., $\log f_\mathrm{GA}(\mathbf{x}) = \sum_{i \in \mathcal{I}} {w_i}\log f_i(\mathbf{x})$.
In the specific case of Gaussian estimate pairs $(\hat{\mathbf{x}}_i,\mathbf{P}_i), i \in \mathcal{I}$ , the GA is given as follows
\begin{align}
\hat{\mathbf{x}}_\mathrm{GA}(\mathbf{w}) &=  \mathbf{P}_\mathrm{GA} \sum_{i \in \mathcal{I}} w_i\mathbf{P}_i^{-1} \hat{\mathbf{x}}_i \ist, \label{eq:CI-x}\\
\mathbf{P}_\mathrm{GA}(\mathbf{w}) &= \Big(\sum_{i \in \mathcal{I}} w_i\mathbf{P}_i^{-1}\Big)^{-1} \ist. \label{eq:CI-P}
\end{align}

When the fusing weights are abandoned, \eqref{eq:CI-x} and \eqref{eq:CI-P} result in the NF \eqref{eq:naive-x} and \eqref{eq:naive-P}, respectively. 
As another special case of the {Gaussian-}GA fusion, the CI fusion \citep{Julier01,Julier06,Bochardt06} optimizes the fusing weights as follows
\begin{equation}\label{eq:CI-w}
  \mathbf{w}_\mathrm{CI} =  \mathop{\arg\min}\limits_{\mathbf{w} \in \mathbb{W}} \mathrm{Tr}(\mathbf{P}_\mathrm{GA}) \ist.
\end{equation}
where $\mathbb{W}\triangleq\{\mathbf{w} \in \mathbb{R}^{I}|\mathbf{w}^\mathrm{T}\mathbf{1}_I = 1, w_i \geq 0, \forall i \in \mathcal{I}\} \subset \mathbb{R}^{I}$.

That is, $\hat{\mathbf{x}}_\mathrm{CI} =\hat{\mathbf{x}}_\mathrm{GA}(\mathbf{w}_\mathrm{CI}) , \mathbf{P}_\mathrm{CI} = \mathbf{P}_\mathrm{GA}(\mathbf{w}_\mathrm{CI})$. In this line of research, a variety of approaches have been proposed for further reducing the error covariance metric, leading to various CI-like, less conservative fusion approaches such as the so-called split-CI \citep{Julier01} / bounded covariance inflation \citep{Reece05}, 
inverse CI (ICI) \citep{Noack17ICI}.
{For example, in contrast to \eqref{eq:CI-P} the resulted covariance of the ICI fusion is given as
\begin{equation}\label{eq:ici}
\mathbf{P}_\mathrm{ICI} = \Big(\sum_{i \in \mathcal{I}}\mathbf{P}_i^{-1} - \big(\sum_{i \in \mathcal{I}} w_i\mathbf{P}_i\big)^{-1}  \Big)^{-1} \ist.
\end{equation}}


In contrary, it is our observation 
that the GA fusion is often not too conservative but insufficient in cluttered scenarios which may suffer from out-of-sequential measurement \citep{Julier05cu}/spurious data \citep{Wang21cu}, {network attack} and model mismatch. The reason is simply that the fused covariance $\mathbf{P}_\mathrm{CI} $ as in \eqref{eq:CI-P} does not take into account $\hat{\mathbf{x}}_\mathrm{CI}$ or any fusing state estimate $\hat{\mathbf{x}}_i$ as both AA and CU fusion do by having $(\hat{\mathbf{x}}_\mathrm{AA}-\hat{\mathbf{x}}_i)(\cdot)^\mathrm{T}$. 
Then, a more conservative fusion approach like the AA and CU fusion becomes useful. Noticing this, a conservative fusion approach referred to FFCC (fast and fault-tolerant convex combination) that uses smaller fusing weights, namely, $\mathbf{w}^\mathrm{T}\mathbf{1}_I =\delta$, where $\delta \leq 1$, was presented in \citep{Wang09}. So, obviously, {$\mathbf{P}_\mathrm{FFCC} \succeq \mathbf{P}_\mathrm{CI}$} where the equation holds iff $\delta =1$. 

Summarizing the above results leads to a \textit{conservativeness chain} {as follows} 
\begin{corollary} \Newrevis{Fusing the same group of estimate pairs using the same weights (if needed), the resulted covariances of the naive fusion, ICI, CI, AA and CA fusion methods satisfy}
\begin{equation}\label{eq:conservationfusionchain}
\mathbf{P}_\mathrm{Naive} \prec \mathbf{P}_\mathrm{ICI} \prec  \mathbf{P}_\mathrm{CI} \preceq \mathbf{P}_\mathrm{AA} \preceq  \mathbf{P}_\mathrm{CU}  
\end{equation}
where $\mathbf{P}_\mathrm{CI} = \mathbf{P}_\mathrm{AA} = \mathbf{P}_\mathrm{CU}$ holds iff all fusing estimators are identical.
\end{corollary}

\subsection{\Newrevis{MSE of EAP estimator}} \label{sec:InflationAnalysis}

As addressed so far, the AA fusion yields a consistent fused covariance if all fusing estimators are covariance consistent. In this section, we analyze the fused mean in the case of EAP estimator, for which the AA $f$-fusion \eqref{eq:AA-f-fusion} will lead to the AA $v$-fusion \eqref{eq:AA-v-fusion} \citep{Li19Second}, i.e.,
\begin{align}\label{eq:AA-EAP}
  \hat{\mathbf{x}}_\mathrm{AA}^\mathrm{EAP} &= \int_{\mathbb{R}^{d}} \mathbf{\tilde{x}} f_\mathrm{AA}(\mathbf{\tilde{x}}) d \mathbf{\tilde{x}}  \nonumber \\
  &= \sum_{i \in \mathcal{I}} w_i \int_{\mathbb{R}^{d}} \mathbf{\tilde{x}}f_i(\mathbf{\tilde{x}}) d \mathbf{\tilde{x}}  \nonumber \\
  &= \sum_{i \in \mathcal{I}} w_i\hat{\mathbf{x}}_i^\mathrm{EAP} \ist.
\end{align}

Now, we consider the following \Newrevis{mean-cross-error (MCE) between the given fusing estimators 
with regard to the real state distribution $p(\mathbf{x})$, c.f., \eqref{eq:MSE-def}
\begin{align}\label{eq:state-trial}
  \mathrm{MCE}_p(\hat{\mathbf{x}}_i,\hat{\mathbf{x}}_j)
  &\triangleq \mathrm{E}_p [(\mathbf{x}-\hat{\mathbf{x}}_i)(\mathbf{x}-\hat{\mathbf{x}}_j)^\mathrm{T}] \nonumber \\
  &= \int_{\mathbb{R}^{d}} (\mathbf{{x}}-\hat{\mathbf{x}}_i)(\mathbf{x}-\hat{\mathbf{x}}_j)^\mathrm{T}p(\mathbf{{x}})d\mathbf{{x}} \ist.
\end{align}}
Further taking into account the random nature of the fusing estimators conditional on their respective observations \Newrevis{that are conditionally independent with each other, the unbiasedness} of the fusing estimators imply
\begin{align}
  \mathrm{MCE}_p(\hat{\mathbf{x}}_i,\hat{\mathbf{x}}_j) & =   \mathbf{0}, \forall i \neq j \ist. \label{eq:independent}
\end{align}


\begin{lemma} \label{lemma_AAmeanVar}
For a set of conditionally independent and unbiased estimators, the AA fusion \eqref{eq:AA-EAP} gains better accuracy in the sense that
\begin{equation} \label{eq:AAmseGain}
\sum_{i \in \mathcal{I}} w_i \big(\mathrm{MSE}_{\hat{\mathbf{x}}_i}- \mathrm{MSE}_{\hat{\mathbf{x}}_\mathrm{AA}}\big) \succ \mathbf{0} \ist.
\end{equation}
 \end{lemma}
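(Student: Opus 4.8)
The plan is to reduce the weighted MSE difference in \eqref{eq:AAmseGain} to an explicitly positive-definite expression by exploiting the two structural assumptions in turn. First I would observe that, by the conditional unbiasedness \eqref{eq:unbiasedness}, the estimation error $\mathbf{x}-\mathbf{\hat{x}}_i$ is zero-mean under $\mathrm{E}_\mathbf{y}$, so the MSE coincides with the estimator's error covariance, $\mathrm{MSE}_{\mathbf{\hat{x}}_i} = \Sigma_\mathbf{y}(\mathbf{\hat{x}}_i,\mathbf{\hat{x}}_i)$. This identification is precisely what allows the second assumption \eqref{eq:independent} to act on the cross terms in the next step.

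Next, I would write the fused error using $\mathbf{w}^\mathrm{T}\mathbf{1}_I = 1$ as $\mathbf{x}-\mathbf{\hat{x}}_\mathrm{AA} = \sum_{i \in \mathcal{I}} w_i(\mathbf{x}-\mathbf{\hat{x}}_i)$, and expand $\mathrm{MSE}_{\mathbf{\hat{x}}_\mathrm{AA}} = \sum_{i \in \mathcal{I}}\sum_{j \in \mathcal{I}} w_i w_j \mathrm{E}_\mathbf{y}\big[(\mathbf{x}-\mathbf{\hat{x}}_i)(\mathbf{x}-\mathbf{\hat{x}}_j)^\mathrm{T}\big]$. By the zero-mean property just established, the off-diagonal summands equal the cross-covariances $\Sigma_\mathbf{y}(\mathbf{\hat{x}}_i,\mathbf{\hat{x}}_j)$, which vanish for $i \neq j$ by \eqref{eq:independent}. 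Only the diagonal survives, giving the compact form $\mathrm{MSE}_{\mathbf{\hat{x}}_\mathrm{AA}} = \sum_{i \in \mathcal{I}} w_i^2\, \mathrm{MSE}_{\mathbf{\hat{x}}_i}$.

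Then, substituting this and invoking $\sum_{i \in \mathcal{I}} w_i = 1$ once more, the left-hand side of \eqref{eq:AAmseGain} collapses to $\sum_{i \in \mathcal{I}} w_i(1-w_i)\, \mathrm{MSE}_{\mathbf{\hat{x}}_i}$. Since the weights are strictly positive, normalized, and $I \geq 2$, each scalar coefficient $w_i(1-w_i)$ is strictly positive; combined with the positive definiteness of every individual error covariance $\mathrm{MSE}_{\mathbf{\hat{x}}_i}$, the resulting nonnegative combination of positive-definite matrices is itself positive definite, which is \eqref{eq:AAmseGain}.

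I expect the only delicate point to be the cross-term cancellation: it requires not mere uncorrelatedness but the exact zero-mean property from \eqref{eq:unbiasedness}, so that $\mathrm{E}_\mathbf{y}\big[(\mathbf{x}-\mathbf{\hat{x}}_i)(\mathbf{x}-\mathbf{\hat{x}}_j)^\mathrm{T}\big]$ is genuinely the cross-covariance $\Sigma_\mathbf{y}(\mathbf{\hat{x}}_i,\mathbf{\hat{x}}_j)$ rather than a non-central second moment; this is where \eqref{eq:unbiasedness} and \eqref{eq:independent} must be used in tandem. A secondary point worth stating explicitly is that the strict inequality $\succ$ relies on $I \geq 2$ (ensuring $w_i < 1$) and on the nondegeneracy of the individual $\mathrm{MSE}_{\mathbf{\hat{x}}_i}$.
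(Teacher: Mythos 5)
Your proof is correct and takes essentially the same approach as the paper's: write the fused error as $\sum_{i \in \mathcal{I}} w_i(\mathbf{x}-\mathbf{\hat{x}}_i)$, use unbiasedness and conditional independence to cancel the cross terms so that $\mathrm{MSE}_{\mathbf{\hat{x}}_\mathrm{AA}} = \sum_{i \in \mathcal{I}} w_i^2\,\mathrm{MSE}_{\mathbf{\hat{x}}_i}$, and then compare $w_i^2$ against $w_i$. Your closing form $\sum_{i \in \mathcal{I}} w_i(1-w_i)\,\mathrm{MSE}_{\mathbf{\hat{x}}_i}$ and the explicit caveats (needing $I \geq 2$ so that $w_i < 1$, and nondegeneracy of the individual $\mathrm{MSE}_{\mathbf{\hat{x}}_i}$) merely make precise what the paper leaves implicit in its final strict inequality.
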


\Newrevis{
\begin{proof}
The proof and experimental demonstration are given in Appendix \ref{Append-MSE}.
\end{proof}
}


{When $w_i = |\mathcal{I}|^{-1}, \forall i \in \mathcal{I}$, \eqref{eq:AA-mse-uncor} will reduce to
\begin{align}\label{mse-convergence}
  \mathrm{MSE}_{\hat{\mathbf{x}}_\mathrm{AA}} & = |\mathcal{I}|^{-1} \sum_{i \in \mathcal{I}} w_i \mathrm{MSE}_{\hat{\mathbf{x}}_i} \\
  & \leq |\mathcal{I}|^{-1}  \max_{i\in \mathcal{I}} \mathrm{MSE}_{\hat{\mathbf{x}}_i} \ist,
\end{align}
which indicates that the AA fusion can significantly benefit in gaining lower MSE (in comparison with the average MSE of all fusing estimators) in the case that all fusing estimators are conditionally independent of each other and unbiased. In fact, it is easy to see the MSE convergence of the fusion expressed by, $\forall \varepsilon >0$,
\begin{equation}\label{eq:MSEconvergence}
  \lim_{|\mathcal{I}|\rightarrow \infty} \text{Pr}[\mathrm{MSE}_{\hat{\mathbf{x}}_\mathrm{AA}} > \varepsilon ] =0 \ist.
\end{equation}
This does not matter how great/small are their respective associated error covariances.} In fact, if these fusing estimators are overall negatively correlated, e.g., \Newrevis{negative definite $\sum_{i<j \in \mathcal{I}} 2 w_i w_j \mathrm{MCE}_\mathbf{y}(\hat{\mathbf{x}}_i,\hat{\mathbf{x}}_j)$}, the \Newrevis{MSE reduction} will be more significant than what was stated in \eqref{eq:AAmseGain}.

\subsection{Multimodality \Newrevis{and MAP estimator}} \label{sec:InflationAnalysis}
There is a prevailing oversimplification of the relationship between the variance and the accuracy of the fused estimate since the work \citep{Mahler09erroneous}.
As indicated in Definition 1, they are not the same: a smaller variance associated with the fused estimate does not directly imply a better estimate whether in the sense of MMSE or Bayes optimality. 
In fact, the modelling of the noise with a reasonably large variance or heavy tail (which will result in a large filter estimate covariance) can accommodate model mismatches \citep{Sarkka09,VoBT13} and combat outlier \citep{Piche12,Zhu13VBstuT}.
\Newrevis{Moreover, note that 
the naive AA fusion is given by the combination of the two densities which are re-weighted forming a multimodal FMD that exhibits more than one mode but may not be merged to a unimodal density in practice. Merging should only be applied when the mixands are close significantly to each other. }
To gain insight from a simple example---which is consistent with the example used in \citep{Mahler09erroneous}--two Gaussian probability densities
with different means and covariances are considered and analyzed in Appendix \ref{Append-model-presevation}.

\Newrevis{The component merging operation may be preferable
in applications such as MMSE/EAP-based estimation {and when the fusing estimators are consistent with each other} \citep{ardeshiri2015}, but may
lead to a loss of important details of the mixture,
e.g., the mode, which is less desirable in MAP-based estimators and is not preferable when the fusing estimators are inconsistent or even conflicting with each other}.
In practice, the MAP estimator is usually implemented in a simplified way which extracts the mode of the greatest weighted fusing estimators as in \eqref{eq:appMAP} (or multiple in the case of multi-target estimation), 
rather than calculating the real mode of the fused density as a whole as in \eqref{eq:exaAAMAP}.
\begin{align}
\breve{\mathbf{x}}_\mathrm{AA}^\mathrm{MAP} & \triangleq \hat{\mathbf{x}}_{j=\mathop{\arg\max}\limits_{i \in \mathcal{I}}w_i}^\mathrm{MAP} \ist, \label{eq:appMAP}\\
\hat{\mathbf{x}}_\mathrm{AA}^\mathrm{MAP}  & \triangleq \mathop{\arg\sup}\limits_{\tilde{\mathbf{x}}\in \mathbb{R}^{d}}  f_\mathrm{AA}(\tilde{\mathbf{x}}) \ist. \label{eq:exaAAMAP}
\end{align}

The common choice \eqref{eq:appMAP} leads to a mode-preservation capacity which reinforces the tolerance of the AA fusion to inconsistent fusing estimators or even fault estimators, namely ``\textit{fault-tolerant}''. This is a unique feature of the AA fusion as compared with the other fusion approaches. 
There is a theoretical explanation. Recall that the AA and GA fusion rules symmetrically minimize the weighted sum of the directional KL divergences 
between the fusing probability distributions and the fused result as follows \citep{Kulhavy96,Abbas09,DaKai_Li_DCAI19} 
\begin{align}
  f_\text{AA}(\mathbf{x}) &= \mathop{\arg\min}\limits_{g \in \mathcal{F}_d} \sum_{i \in \mathcal{I}}{w_iD_\text{KL}\big(f_i||g\big)} \ist, \label{eq:AA_KL divergence} \\
  f_\text{GA}(\mathbf{x}) &= \mathop{\arg\min}\limits_{g: \int_{\mathbb{R}^{d}} {g(\mathbf{x}) d\mathbf{x}} =1} \sum_{i \in \mathcal{I}}{w_iD_\text{KL}\big(g||f_i\big)} \ist, \label{eq:GA_KL divergence} 
\end{align}
where {$\mathcal{F}_d$ denotes the set of scalar-valued functions in space $\mathbb{R}^{d}$: $\mathcal{F}(\mathbf{x}) = \{f: \mathbb{R}^{d} \rightarrow \mathbb{R} \}$}.

{Comparison of the two alternative forms for the KL divergence has been illustrated in \citep[p. 468-469]{Bishop06PRML} and highlighted in the viewpoint of variational inference as follows
\begin{quote}
  In practical applications, the true posterior distribution will often be multimodal, with most of the posterior mass concentrated in some number of relatively small regions of parameter space. These multiple modes may arise through nonidentifiability in the latent space or through complex nonlinear
dependence on the parameters. 
A variational treatment based on the minimization of [the forward KL divergence as on the right hand of \eqref{eq:GA_KL divergence}] will tend to find one of these modes. By contrast, if we
were to minimize [the reverse KL divergence as on the right hand of \eqref{eq:AA_KL divergence}], the resulting approximations would average across all
of the modes.
\end{quote}}

Similar illustration for the comparison of the two forms of KL divergence can also be found in \citep{ardeshiri2015}. They are members of the alpha family of divergences, manifesting zero-forcing and zero-avoiding properties, respectively \citep[p. 470]{Bishop06PRML}. \Newrevis{
The symmetry of the optimization problems \eqref{eq:AA_KL divergence} and \eqref{eq:GA_KL divergence} results in the duality of both averaging approaches \citep{Kulhavy96}.
}

\section{Information Divergence and Fusion Weights} \label{sec:information-theoretics}

\Newrevis{
A good estimator has to be accurate as much as the conservativeness can still be guaranteed. To this end, the fusing weights need to be properly designed.}
The simplest weighting solution is given by the normalized uniform weights, 
namely $\mathbf{w} = \mathbf{1}_I/I$. That is, all fusing estimators are treated equally. 
This is simple but does not distinguish the information of high quality from that of low.
For the purpose of determining the best fusion weights, the quality/performance of the fusing estimator may be measured in two aspects. The first is by the uncertainty of the estimator such as the associated covariance of the estimate pair. The second can be by its divergence relevant to the real state distribution $p(\mathbf{x})$ such as the KL divergence $D_\text{KL}\left(f\| p\right)$. This leads to two quite different ways for determining the fusing weights as to be addressed next. \Newrevis{The latter will be the focus of this section.}

Nevertheless, the fusing weights can also be determined for some other purposes, e.g., in the context of seeking consensus over a peer-to-peer network that is of the scope of this paper, 
they are often designed for ensuring fast convergence {\citep{Degroot74,Xiao04,Carvalho13}}. 
\Newrevis{Without loss of generality, we still consider the general probability density fusion and the results are expected to be extendable to the general finite mixture modeling \citep{Li23BM}}.






{\subsection{Error Covariance-based Fusion Weights}
Inspired by the naive fusion \eqref{eq:naive-x}, a heuristic weighting approach to fusing estimate pairs $(\hat{\mathbf{x}}_i,\mathbf{P}_i)$, $i \in \mathcal{I}$ can be given as
\begin{equation}\label{eq:weight-Err-Cov-P}
  w_i = \Big(\sum_{j \in \mathcal{I}}\mathrm{Tr}(\mathbf{P}_j^{-1})\Big)^{-1}\mathrm{Tr}(\mathbf{P}_i^{-1})
\end{equation}
This can be extended to the general probability density $f_i(\mathbf{x})$, $ i \in \mathcal{I}$ as follows
\begin{equation}\label{eq:weight-Err-Cov-f}
  w_i = \frac{\mathrm{Tr}\big((\int_{\mathbb{R}^{d}}{(\mathbf{x} - \mathbf{\hat{x}}_i)(\mathbf{x} - \mathbf{\hat{x}}_i)^\text{T} f_i(\mathbf{x})}d \mathbf{x})^{-1}\big)}{ \sum_{j \in \mathcal{I}} \mathrm{Tr}\big((\int_{\mathbb{R}^{d}}{(\mathbf{x} - \mathbf{\hat{x}}_j)(\mathbf{x} - \mathbf{\hat{x}}_j)^\text{T} f_j(\mathbf{x})}d \mathbf{x})^{-1}\big)}
\end{equation}
where $\hat{\mathbf{x}}_i = \int_{\mathbb{R}^{d}} \mathbf{\tilde{x}} f_i(\mathbf{\tilde{x}}) d \mathbf{\tilde{x}}$.}

\subsection{Information-theoretic Suboptimal Fusion Weights}


For a number of probability distributions ${f_i}(\mathbf{x}), i \in \mathcal{I}$, 
the KL divergence of the target distribution $p(\mathbf{x})$ relative to their average $f_\text{AA}(\mathbf{x})$  is given as
\begin{align}
 D_\text{KL}\left({f_\text{AA}}\| p\right) 
 = & \int_{\mathbb{R}^{d}} {\sum_{i \in \mathcal{I}} {w_i }{f_i}(\mathbf{x})\log \frac{{f_\text{AA}}(\mathbf{x})}{p(\mathbf{x})}\delta \mathbf{x}}   \nonumber \\
 = & \sum_{i \in \mathcal{I}} {w_i} \big( D_\text{KL}({f_i}\| p)  -  D_\text{KL}( {f_i}\| {f_\text{AA}}) \big) \label{eq:AAequation} \\
\leq & \sum_{i \in \mathcal{I}} {w_i} D_\text{KL}({f_i}\| p) \ist, \label{eq:AAbound}
\end{align}
where the equation in \eqref{eq:AAbound} holds iff $D_\text{KL}( {f_i}\| {f_\text{AA}}) = 0$, $\forall i \in \mathcal{I}$, namely, all sub-distributions ${f_i}, i \in \mathcal{I}$ are identical.


The result \eqref{eq:AAbound} has been given earlier in the textbook \citep[Theorem 4.3.2]{Blahut87}. Noticing that the KL divergence function is convex, \eqref{eq:AAbound} can also be proved by using Jensen's inequality \citep[Ch. 2.6]{Cover01informationBook}. This can be interpreted as that the average of the mixture fits the target distribution better than all mixands on average. 
We show next that optimized fusion weights will accentuate the benefit of fusion. 
Following \eqref{eq:AAequation}, the optimal solution should minimize $D_\text{KL}\left({f_\text{AA}}\| p\right)$ in order to best fit the target distribution, i.e., 
\begin{align}
  \mathbf{w}_\text{opt} 
  = &  \mathop{\arg\min}\limits_{\mathbf{w} \in \mathbb{W}} \sum_{i \in \mathcal{I}} w_i \big( D_\text{KL}({f_i}\| p)  - D_\text{KL}( {f_i}\| f_\text{AA}) \big) \ist. \label{eq:min_W}
\end{align}

As shown above, the component that fits the target distribution better (corresponding to smaller $D_\text{KL}({f_i}\| p)$ and greater $D_\text{KL}( {f_i}\| f_\text{AA})$) 
deserves a greater fusing weight. 
However, the target distribution $p(\mathbf{x})$ is typically unknown or too complicated to be practically useful, so is $D_\text{KL}({f_i}\| p)$. It is also obvious that even if the real distribution $p(\mathbf{x})$ is available, the knowledge $D_\text{KL}({f_i}\| p) < D_\text{KL}({f_j}\| p)$, $\forall j \neq i$ does not necessarily result in $w_i =1, w_j=0, \forall j \neq i$. It depends on $D_\text{KL}( {f_i}\| f_j), \forall j \neq i$. In other words, when the fusing weights are properly designed, the arithmetically averaged mixture \Newrevis{or their merge using \eqref{eq:best-fit-GM-Gaussian}} may fit the target distribution better than the best component. This can be easily illustrated by examples as given in Fig.~\ref{fig:OptAAfusion} \Newrevis{where the merging operation is performed to the AA fusion result}.
Only in few certain cases, 
the optimal result is given by the single best density.

A simplified, practically operable, alternative is ignoring the former part in \eqref{eq:min_W} which will then reduce to the following suboptimal {constrained} maximization problem
\begin{equation}
  \mathbf{w}_\text{subopt} =  \mathop{\arg\max}\limits_{\mathbf{w} \in \mathbb{W}}  \sum_{i \in \mathcal{I}} w_i D_\text{KL}( {f_i}\| f_\text{AA}) \ist. \label{eq:entropyMax1}
\end{equation}
where the weights are constrained in the weight space $\mathbb{W}$: $w_i \geq 0, \forall i \in \mathcal{I}, \sum_{i \in \mathcal{I}} w_i = 1$.

{To gain more insight about the above optimization problem, we here address the necessary and sufficient conditions for the optimal solution. First, the problem can be more formally written as
\begin{align}
\mathop{\max} \hspace{2mm} & \sum_{i \in \mathcal{I}} w_i D_\text{KL}( {f_i}\| f_\text{AA}) \\
\text{s.t.} \hspace{2mm}  & \sum_{i \in \mathcal{I}} w_i = 1 \\
 & w_i \geq 0, i=1,2,..., I
\end{align}
for which one can construct the following Lagrangian function
\begin{equation}\label{eq:Lagrangian}
  L( \{w_i\}, \lambda, \{\mu_i\}) = \sum_{i \in \mathcal{I}} w_i D_\text{KL}( {f_i}\| f_\text{AA}) + \lambda(\sum_{i \in \mathcal{I}} w_i - 1) + \sum_{i \in \mathcal{I}} \mu_i w_i
\end{equation}
where $\lambda$ and $\{\mu_i\}, i \in \mathcal{I}$ are the Lagrangian multipliers.}

{The Karush-Kuhn-Tucker conditions for the above constrained optimization problem, which are necessary but not sufficient conditions for the optimal solution, are given as follows \begin{align}
\frac{\partial L(\{w_i\}, \lambda, \{\mu_i\}) }{\partial w_i}  & =0,  i=1,2,..., I \\
\sum_{i \in \mathcal{I}} w_i - 1 & =0  \\
w_i & \geq 0, i=1,2,..., I \\
\mu_i & \geq 0, i=1,2,..., I \\
\mu_i w_i & = 0, i=1,2,..., I
\end{align}
}

\begin{lemma} \label{lemma:mid-dist}
\Newrevis{A sufficient condition for $\mathbf{w}^* = \mathbf{w}_\mathrm{subopt}$ where $\mathbf{w}_\mathrm{subopt}$ is given in \eqref{eq:entropyMax1} can be given by, $\forall i, j \in \mathcal{I}$
\begin{equation}
 D_\text{KL}\big( {f_i}\| f_\text{AA}(\mathbf{w}^*)\big) = D_\text{KL}\big( {f_j}\| f_\text{AA}(\mathbf{w}^*)\big) \ist. \label{eq:AA-MaxMin-stationaryPoint}
\end{equation}}
\end{lemma}

\Newrevis{
\begin{proof}
The proof is given in Appendix \ref{Append-KLD}.
\end{proof}
}

\begin{figure}
\centering
\centerline{\includegraphics[width=12cm]{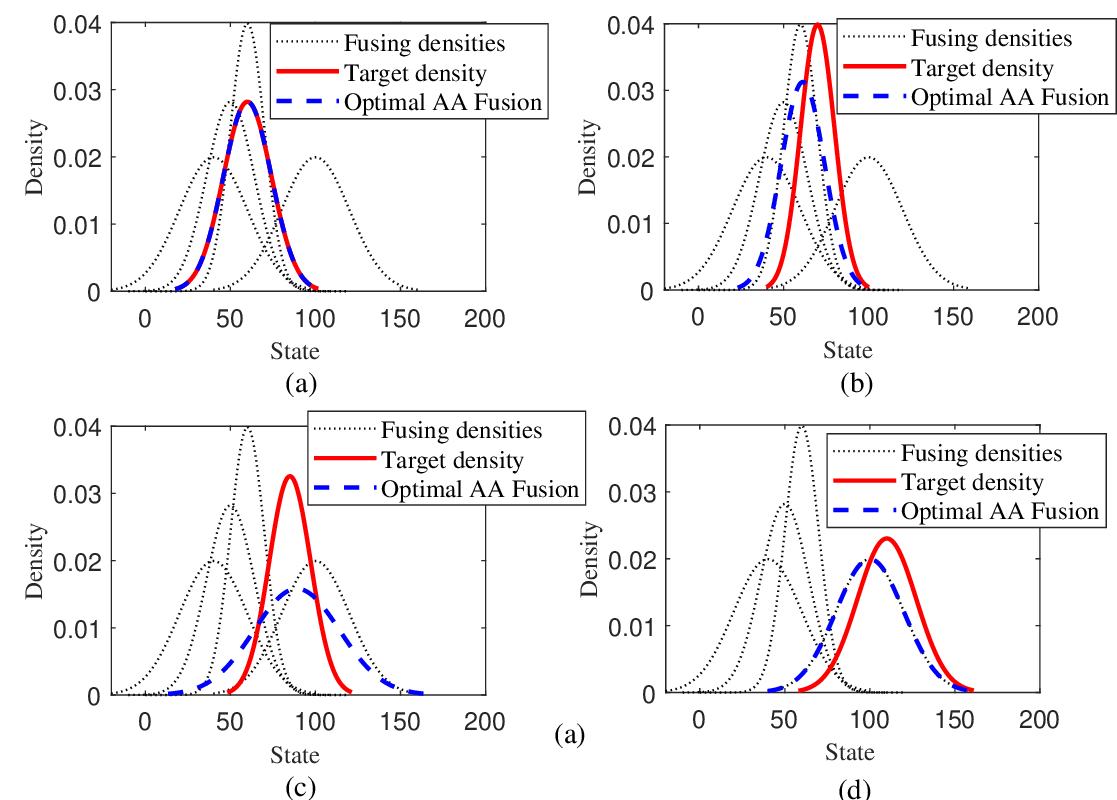}}
\caption{Optimal AA fusion of four Gaussian densities (each given by a black dotted line) to best fit the target density (red solid line) in four different cases, resulting in the optimally merged Gaussian density (blue dashed line). 
} \label{fig:OptAAfusion}
\vspace{-2mm}
\end{figure}

The suboptimal, practically operable, optimization given by \eqref{eq:entropyMax1} assigns a greater fusing weight to the distribution that diverges more from the others. This can be referred to as a \textit{diversity preference} solution. {Similarly, \citep[Section VII-A]{Koliander22} presents a solution that 
assigns a nonnegative score $\gamma_k$ to each component pdf $q_i(\theta)$ which is inversely related to the maximum discrepancy between $q_i(\theta)$ and the others.}
Alternatively, one may resort to other information measures 
to assign higher weights to the components that fit the data better {\citep{Genest90weightsLOP,DeGroot91optiWeights}}. For example, given the new observation data $\mathbf{y}$ and the likelihood function $p_i(\mathbf{y}|\mathbf{x})$ of the fusing estimator $i \in \mathcal{I}$, the fusing weights can be defined as, c.f.,  
\begin{equation}\label{eq:likelihood-weights}
   w_i = \frac{p_i(\mathbf{y}|\mathbf{x})}{\sum_{j \in \mathcal{I}}p_j(\mathbf{y}|\mathbf{x})} \ist.
\end{equation}

\subsection{\Newrevis{Max-Min Optimization for suboptimal AA and GA Fusion}}
Recall the divergence minimization \eqref{eq:AA_KL divergence} that the AA fusion admits \citep{DaKai_Li_DCAI19,Li19Bernoulli}.
Now, combining \eqref{eq:AA_KL divergence} with \eqref{eq:entropyMax1} yields joint optimization of the fusing function and fusing weights as follows
\begin{equation}
{(\mathbf{w}_\text{subopt},f_\text{AA})} = 
  \arg\mathop{\max}\limits_{{\mathbf{w} \in \mathbb{W}}} \mathop{\min}\limits_{{g \in \mathcal{F}_d}} \sum_{i \in \mathcal{I}} w_i D_\text{KL}( {f_i}\| g)  \ist. \label{eq:JiontOpt-AA}
\end{equation}

This variational fusion problem \eqref{eq:JiontOpt-AA} resembles that for geometric average (GA) fusion \citep{Nielsen13Chernoff,Uney19consistency}, i.e.,
\begin{equation}
{(\mathbf{w}_\text{subopt},f_\text{GA})} = \arg\mathop{\max}\limits_{{\mathbf{w} \in \mathbb{W}}} \mathop{\min}\limits_{{g \in \mathcal{F}_d}} \sum_{i \in \mathcal{I}} w_i D_\text{KL}(g \|{f_i}).  \label{eq:JiontOpt-GA}
\end{equation}
Results analogous to \eqref{eq:AA-MaxMin-stationaryPoint} for the GA fusion is given as
$D_\text{KL}(f_\text{GA}(\mathbf{w}_\text{subopt})\| {f_i})  = D_\text{KL}(f_\text{GA}(\mathbf{w}_\text{subopt})\| {f_j})$
which has been earlier pointed out in \Newrevis{one way \citep{Hurley02,Julier06} or another \citep{Ajgl15acc}} but so far there is no explicit proof of the sufficient condition as given in Lemma \ref{lemma:mid-dist}.
This suboptimal GA fusion is related to the Chernoff information/fusion \Newrevis{\citep{Cover01informationBook,Ahmed12,Ajgl15acc}}.
\Newrevis{However, no earlier derivation has been connected with the optimal solution \eqref{eq:min_W}, by which we notice the potential of such a diversity preference weighting choice in approaching the best fit of the target distribution. 
Notably, since weighted sum of the divergences is concave in the weights $\mathbf{w}$ and convex in the function $f$, the min and max functions can be switched without changing the final results, namely,
$$\arg\mathop{\max}\limits_{{\mathbf{w} \in \mathbb{W}}} \mathop{\min}\limits_{{g \in \mathcal{F}_d}} \sum_{i \in \mathcal{I}} w_i D_\text{KL}(g \|{f_i}) =\arg\mathop{\min}\limits_{{g \in \mathcal{F}_d}} \mathop{\max}\limits_{{\mathbf{w} \in \mathbb{W}}} \sum_{i \in \mathcal{I}} w_i D_\text{KL}(g \|{f_i})$$
 This may then lead to another interpretation of the optimization, i.e., minimizing the maximal divergence, analogous with the case of GA fusion \citep{Ajgl15acc}.}

\subsection{Case Study: Gaussian Fusion}
We now consider the special case of Gaussian PDF. 
The KL divergence of $f_1(\mathbf{x})\triangleq \mathcal{N}(\mathbf{x};\mathbf{\mu}_1,\mathbf{P}_1)$ relative to $f_2(\mathbf{x})\triangleq \mathcal{N}(\mathbf{x};\mathbf{\mu}_2,\mathbf{P}_2)$ is given as
\begin{align} 
D_\text{KL}\big(\mathcal{N}(\mathbf{\mu}_1,\mathbf{P}_1)\| \mathcal{N}(\mathbf{\mu}_2,\mathbf{P}_2)\big) 
= \frac{1}{2}\bigg[{\mathrm {tr}}\big(\mathbf{P}_{2}^{-1}\mathbf{P}_{1}\big) -d + \log{\det(\mathbf{P}_{2})\over \det(\mathbf{P}_{1})} + \|{\mathbf{\mu}}_{1}-{\mathbf{\mu}}_{2}\|^2_{\mathbf{P}_2} \bigg] \ist, \nonumber
\end{align}
where $d$ is the demission of $\mathbf{x}$ and $\|{\mathbf{\mu}}_{1}-{\mathbf{\mu}}_{2}\|^2_{\mathbf{P}} \triangleq ({\mathbf{\mu}}_{1}-{\mathbf{\mu}}_{2})^\mathrm{T}\mathbf{P}^{-1}({\mathbf{\mu}}_{1}-{\mathbf{\mu}}_{2})$.

Unfortunately, there is no such analytical expression for the KL divergence between two GMs {or even between a Gaussian and a GM.} 
Despite the Monte Carlo sampling method, a number of approximate, exactly-expressed approaches have been investigated in the literature. 
In the following we consider \Newrevis{a moment-matching approach}. 
{We note that the approximation in either case will usually be less accurate if more sensors are involved. Then, a simple, heuristic solution that extends the proposed pairwise AA fusion to the multiple sensors case is to perform the fusion to all sensors one by one in order. Another heuristic solution is to cluster all sensors into different groups so that the fusion can be performed with fewer sensors and then their fusion results are (re-grouped if necessary and) fused till the final result is obtained.}

The \Newrevis{moment-matching approach} is merging the mixture to a single Gaussian PDF, or to say, fitting the GM PDF by a single Gaussian PDF. Then, the divergence of two GMs or between a Gaussian PDF and a GM can be approximated by that between their best-fitting Gaussian PDFs. 
As given in Lemma 3, the moment fitting Gaussian PDF for a GM ${f_i}(\mathbf{x}) = \mathcal{N}(\mathbf{x};\mathbf{\mu}_i,\mathbf{P}_i), i \in \mathcal{I}$ is $f_\text{AA,merged}(\mathbf{x}) = \mathcal{N}(\mathbf{x};\mathbf{\mu}_\text{AA},\mathbf{P}_\text{AA})$, where ${\mathbf{\mu}}_\text{AA} = \sum_{i \in \mathcal{I}} w_i\mathbf{\mu}_i, \mathbf{P}_\text{AA} = \sum_{i \in \mathcal{I}} w_i \big(\mathbf{P}_i + (\mathbf{\mu}_\mathrm{AA}-\mathbf{\mu}_i)(\cdot)^\mathrm{T} \big)$.
Using $f_\text{AA,merged}(\mathbf{x})$ for approximately fitting the target Gaussian PDF ${p}(\mathbf{x}) = \mathcal{N}(\mathbf{x};\mathbf{\mu},\mathbf{P})$ yields
\begin{align}
  \mathbf{w}_\text{opt} \approx \mathop{\arg\min}\limits_{\mathbf{w} \in \mathbb{W}}  &  D_\text{KL}\big( \mathcal{N}(\mathbf{\mu}_\text{AA},\mathbf{P}_\text{AA}) \| \mathcal{N}(\mathbf{\mu},\mathbf{P}) \big)  \nonumber \\
= \mathop{\arg\min}\limits_{\mathbf{w} \in \mathbb{W}} &\bigg[{\mathrm {tr}}\big(\mathbf{P}^{-1}\mathbf{P}_\text{AA}\big)  + \log{\det(\mathbf{P})\over \det(\mathbf{P}_\text{AA})}
 +\|{\mathbf{\mu}}_\text{AA}-{\mathbf{\mu}}\|^2_{\mathbf{P}}\bigg] \ist. \label{eq:optimalWeightGassuain}
\end{align}

When 
the diversity preference solution as given in \eqref{eq:entropyMax1} is adopted, one has
\begin{align}
  \mathbf{w}_\text{subopt} \approx \mathop{\arg\max}\limits_{\mathbf{w} \in \mathbb{W}} &  \sum_{i \in \mathcal{I}} w_i D_\text{KL}\big( f_i \| {f_\mathrm{AA,merged}}\big)  \nonumber \\
= \mathop{\arg\max}\limits_{\mathbf{w} \in \mathbb{W}} &  \sum_{i \in \mathcal{I}} w_i \bigg[{\mathrm {tr}}\big(\mathbf{P}_\text{AA}^{-1}\mathbf{P}_i\big)  + \log{\det(\mathbf{P}_\text{AA})\over \det(\mathbf{P}_i)} 
 +\|{\mathbf{\mu}}_i-{\mathbf{\mu}}_\text{AA}\|^2_{\mathbf{P}_\text{AA}} \bigg] \ist. \label{eq:suboptimalWeightGassuain}
\end{align}

Note that as explained in section \ref{sec:InflationAnalysis} and in \citep{ardeshiri2015}, the merging operation may lead to a loss of the important feature of the mixture, such as the mode.
More general result of the KL divergence between multivariate generalized Gaussian distributions can be found in \citep{Bouhlel19}.

\section{Simulations}
\label{sec:simulation}
We considered two representative single-target tracking scenarios, based on either linear or nonlinear state space models. In each scenario, the simulation is performed for 100 Monte Carlo runs, each having 100 filtering steps. 
Here, we limit our simulation study to the benchmark case having no false and missing data and having merely two sensors in order to gain the insight of the performance of these fusion approaches in perfectly modelled scenarios. This does not so match the purpose of fault-tolerant fusion such as the CU approach that is designed particularly for fusion involving inconsistent estimators.

The root MSE (RMSE) is a suitable metric for accuracy evaluation. 
The base filters we adopted are the benchmark Kalman filter (KF) / cubature KF (CKF) \citep{Arasaratnam09} (for the linear and nonlinear models, respectively) and the sampling importance resampling (SIR) particle filter. It is known that for the linear Gaussian model, the KF provides the exact, optimal solution. The SIR filter is a Bayesian filter based on Monte Carlo simulation for which we use $N_p=200, 500$ particles in the linear and nonlinear model, respectively.

Comparison fusion methods are the NF, the CI fusion {using \eqref{eq:CI-x}-\eqref{eq:CI-w}}, the CU fusion using \eqref{eq:CU_P-max} and the AA fusion. {The AA fusion approach may use the suboptimal weights \eqref{eq:suboptimalWeightGassuain}(which is referred to as the default AA fusion) or the heuristic error-covariance-based weights \eqref{eq:weight-Err-Cov-P}/\eqref{eq:weight-Err-Cov-f}. They are referred to as suboptimal weights and Cov-weights, respectively, and will be compared with some ad-hoc, fixed weights given in advance}. In addition to these density fusion approaches we also consider the noncooperative single-sensor KF/CKF/SIR filters (that use the measurements of Sensor 1 only) and the iterated-corrector (IC) algorithm for fusing the measurements of two sensors. In the IC approach, the measurements of two sensors are 
used in sequence in the filters, which amounts to calculating the joint likelihood by multiplying their respective likelihoods, i.e.,
\begin{equation}\label{eq:jointLikelihood}
  p(\mathbf{y}_{1,k},\mathbf{y}_{2,k}|\mathbf{x}_k) = p(\mathbf{y}_{1,k}|\mathbf{x}_k)p(\mathbf{y}_{1,k}|\mathbf{x}_k)\ist,
\end{equation}
where $\mathbf{y}_{1,k}$ and $\mathbf{y}_{2,k}$ are the measurements of the target generated at sensor 1 and 2 at time $k$, respectively.

If the measurements of two sensors are uncorrelated, the IC approach is Bayesian optimal. 
The posterior densities yielded by the KFs/CKFs are Gaussian PDFs of which the NF, CI, CU fusion all result in a single Gaussian PDF. In contrast, the AA fusion of two Gaussian PDFs is a GM of two components for which we apply the merging scheme \citep{Salmond09} to maintain closed-form Gaussian recursion as required in KFs/CKFs. \Newrevis{We note that this will cause extra GM merging error}. For the SIR filters, the AA fusion of two particle distributions remains a particle distribution (of a larger size namely $2N_p$ if two are merged into one) which is the advantage of the AA fusion.
However, in order to apply the NF, CI and CU fusion approach to the SIR filters, it is necessary to convert those particles to a parametric distribution for which we apply the Gaussian PDF. This particles-to-Gaussian conversion is also used in the AA fusion in order to calculate the suboptimal fusion weight \eqref{eq:suboptimalWeightGassuain}.
After the fusion, $N_p$ new particles are re-sampled from the fused Gaussian PDF. 

{Matlab codes for reproducing the simulations are available at the following URL:\\ https://sites.google.com/site/tianchengli85/matlab-codes/aa-fusion.}

\subsection{Linear Scenario}
In our first simulation, the state of the target $\mathbf{x}_k=[p_{x,k},\dot{p}_{x,k},p_{y,k},\dot{p}_{y,k}]^\mathrm{T}$ consists of planar position $[p_{x,k},p_{y,k}]^\mathrm{T}$ and velocity. $[\dot{p}_{x,k},\dot{p}_{y,k}]^\mathrm{T}$.
At time $k=0$, it is randomly initialized as $\mathbf{x}_0 \sim \mathcal{N}(\mathbf{x};\mathbf{\mu}_0,\mathbf{P}_0)$,
where $\mathbf{\mu}_0 = [1000\textrm{m},20\textrm{m}/\textrm{s},1000\textrm{m},0\textrm{m}/\textrm{s}]^\mathrm{T}$ with $\mathbf{P}_0 =$ diag$\{[500\textrm{m}^2, 50\textrm{m}^2/\textrm{s}^2$, $ 500\textrm{m}^2, 50\textrm{m}^2/\textrm{s}^2]\}$, where diag$\{\mathbf{a}\}$ represents a diagonal matrix with diagonal $\mathbf{a}$.
The target moves following a nearly constant velocity motion given as (with the sampling interval $\Delta =1$s)
\begin{equation}\label{eq:Simu_TargetDynamic}
\mathbf{x}_k= \left[ \begin{array}{cccc}
1 & \Delta & 0 & 0 \\
0 & 1 & 0 & 0 \\
0 & 0 & 1 & \Delta \\
0 & 0 & 0 & 1 \\
\end{array} \right] \mathbf{x}_{k-1}+ \left[ \begin{array}{cc}
\frac{\Delta^2}{2} & 0 \\
\Delta & 0 \\
0 & \frac{\Delta^2}{2} \\
0 & \Delta \\
\end{array} \right] \mathbf{u}_{k-1} \ist,
\end{equation}
where the process noise $\mathbf{u}_k \sim \mathcal{N}(\mathbf{u};\mathbf{0}_2\textrm{m}/\textrm{s}^2,25\mathbf{I}_2\textrm{m}^2/\textrm{s}^4)$.

Both sensors have the following linear measurement model
\begin{equation}\label{eq:linear-measurement-model}
\mathbf{y}_{k}= \left[ \begin{array}{cccc}
1 & 0 & 0 & 0 \\
0 & 0 & 1 & 0 \\
\end{array} \right] \mathbf{x}_k+ \left[ \begin{array}{c}
v_{k,1} \\
v_{k,2} \\
\end{array} \right] \ist,
\end{equation}
with $v_{k,1}$ and $v_{k,2}$ as mutually independent zero-mean Gaussian noise with the same standard deviation $R$. For sensor 1, it is fixed with $R_1=20$m {while for sensor 2, we will test different magnitudes of observation noises by setting $R_2= \rho R_1$ for different ratios $\rho =1,...,10$. In all, sensor 1 has a better quality than sensor 2.}

\begin{figure}
\centering
\centerline{\includegraphics[width=12cm]{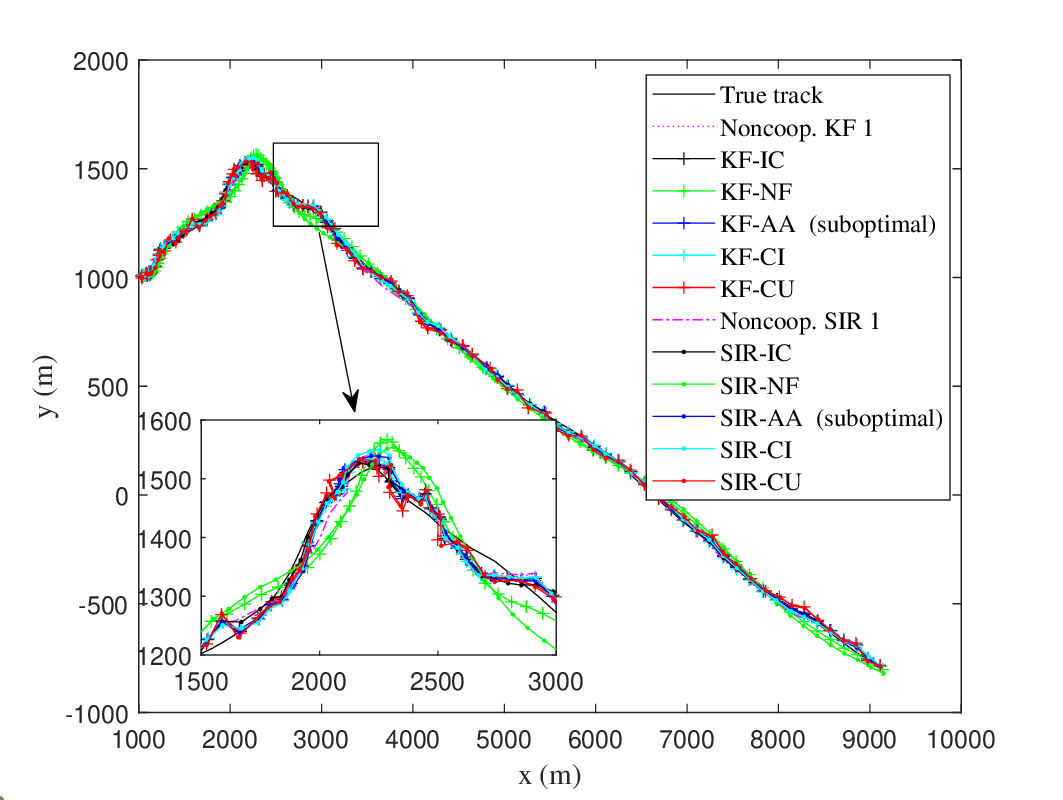}}
\caption{The real target trajectory, the estimates of the noncooperative filters and their two-sensor-fusion results when $R_2 =2R_1$ in one trial of the linear scenario.} \label{fig:Trajectory_Linear}
\vspace{-2mm}
\end{figure}

\begin{figure}
\centering
\centerline{\includegraphics[width=12 cm]{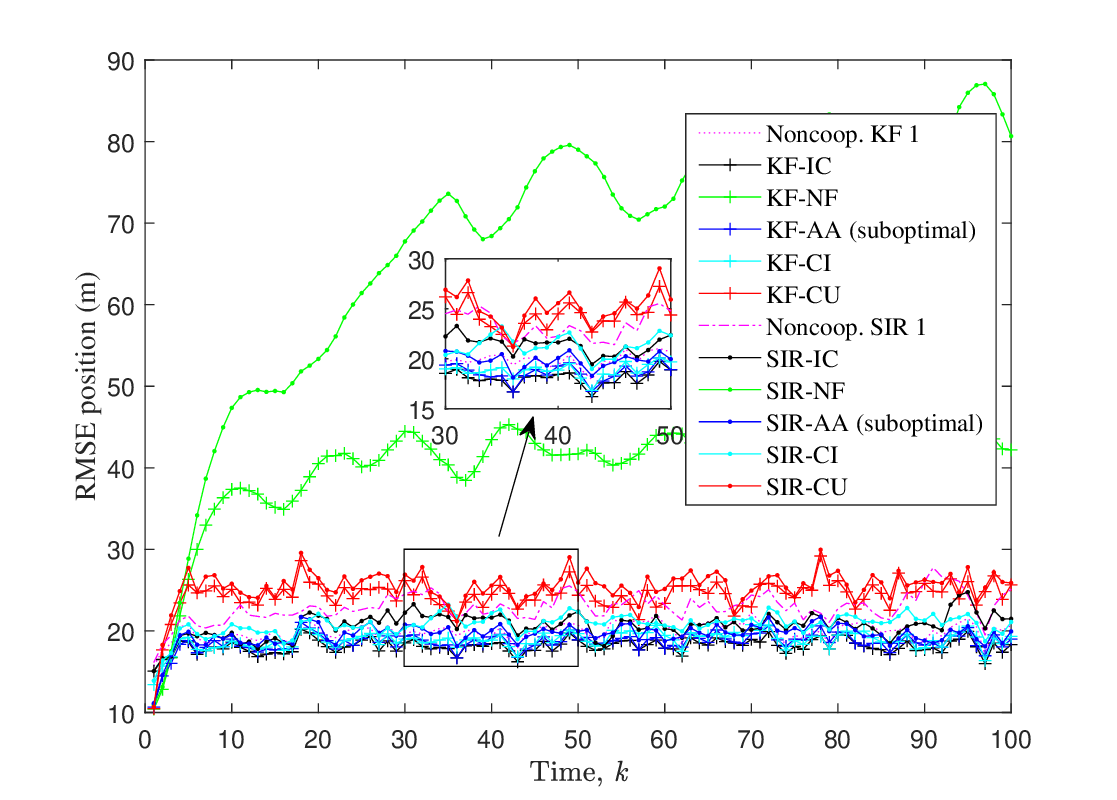}}
\caption{The position RMSEs of noncooperative and two-sensor-fusion filters when $R_2 =2R_1$ in the linear scenario.} \label{fig:posRMSE_Linear}
\vspace{-2mm}
\end{figure}

\begin{figure}
\centering
\centerline{\includegraphics[width=12 cm]{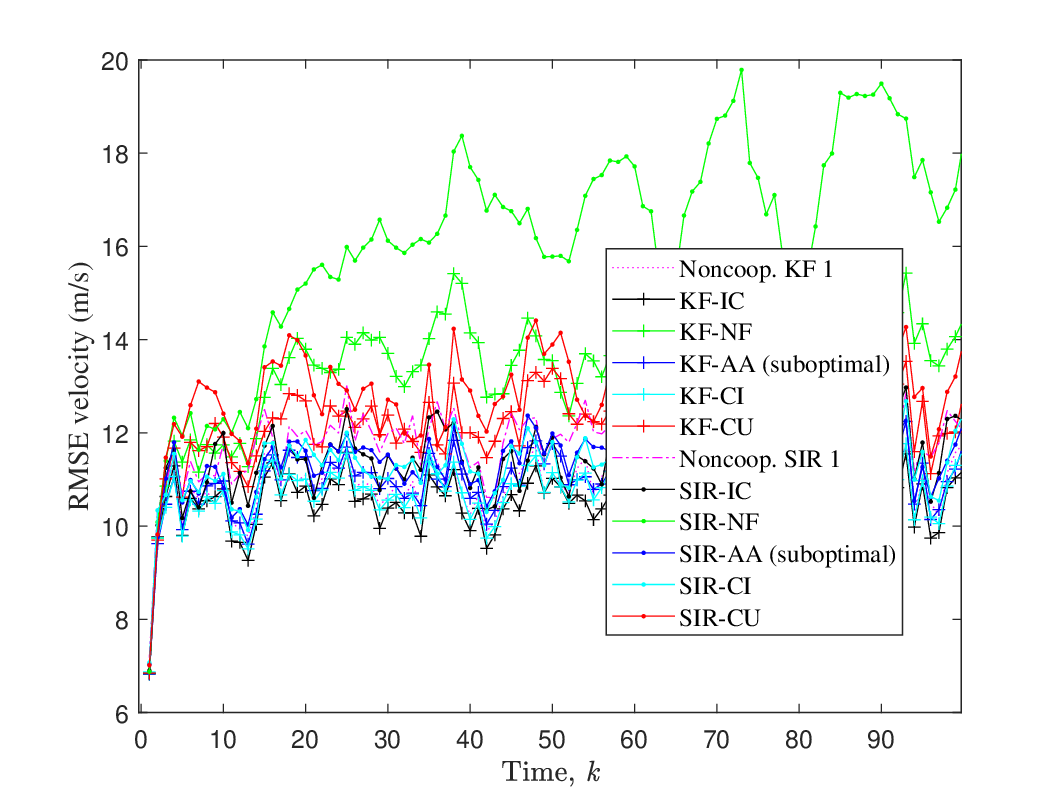}}
\caption{The velocity RMSEs of noncooperative and two-sensor-fusion filters when $R_2 =2R_1$ in the linear scenario.} \label{fig:velRMSE_Linear}
\vspace{-2mm}
\end{figure}

The real trajectory of the target and the estimates of the KFs and SIR filters in one trial when $R_2 =2R_1$ are given in Fig. \ref{fig:Trajectory_Linear}. The RMSEs of these filters are given in Fig. \ref{fig:posRMSE_Linear} and Fig. \ref{fig:velRMSE_Linear} in terms of the position and velocity estimation, respectively. The average RMSEs over all filtering times are given in Table \ref{tab:LinearRMSE}. {Here, the AA fusion weights are given by the suboptimal weighting approach using \eqref{eq:suboptimalWeightGassuain}.} As shown, the average performance of the NF based filters namely KF-NF and SIR-NF is the worst in all, even much worse than the noncooperative filters that apply no fusion. The CU-based filters are the second worst. This is probably because the former completely omits the cross-correlation between two sensors leading to inconsistent fused result, 
while the latter is over conservative leading to inaccurate results. As expected, the exactly Bayes-optimal KF-IC filter performs the best in all. The AA fusion performs similar with the CI fusion in both cases of KF and SIR filters, close to the best IC filters. 
When any of NF, AA, CI, CU or IC is applied for fusion or in the noncooperative mode, the KF slightly outperforms the SIR filter in this linear Gaussian scenario as expected.

\Newrevis{We also consider the correlated measurement noises among different sensors. That is, the noise is composed of two independent parts: $v_{k,j} = v_{k,j}^i + v_j^0, j=1, 2$ where $v_1^0, v_2^0 \sim \mathcal{N}(v; 0, 100\mathrm{m}^2)$ are two independent common components in $x$ and $y$ coordinates, respectively, of different sensors, $v_{k,j}^i \sim \mathcal(N)(v; 0, 300\mathrm{m}^2)$ for sensor 1 and $v_{k,j}^i \sim \mathcal(N)(v; 0, (400\rho-100)\mathrm{m}^2)$ for sensor 2. The average RMSEs over all filtering times are given in Table \ref{tab:LinearRMSE_correlated}, which is similar with the non-correlated measurement noise case as given in Table \ref{tab:LinearRMSE}, with slightly increased RMSEs of all methods except for the NF methods where the position RMSEs are slightly reduced in the correlated measurement noise case.}

\begin{table}[t!]
\renewcommand{\baselinestretch}{1.17}\small
\caption{Average RMSE of different filters when $R_2 =2R_1$ in the linear scenario \Newrevis{when two sensors have independent measurement noises}.}
\label{tab:LinearRMSE}
\vspace{-3mm}
\begin{center}
{\footnotesize
\begin{tabular}{|c|c|c|}
\hline
Filter & ARMSE position [m]& ARMSE velocity [m/s] \\
\hline
\hline
Noncoop. KF 1 &19.95 &10.86 \\
\hline
KF-IC	&18.13 &10.51\\
\hline
KF-NF	&39.99 &13.32\\
\hline
KF-AA (suboptimal)	&18.57 &10.91\\
\hline
KF-CI 	&18.85 &10.70\\
\hline
KF-CU 	&24.36 &12.06  \\
\hline
Noncoop. SIR 1 &22.90 &11.74 \\
\hline
SIR-IC&20.62 &11.36\\
\hline
SIR-NF	&66.07 &16.16\\
\hline
SIR-AA (suboptimal)	&19.55 &11.30 \\
\hline
SIR-CI 	&20.76&11.23 \\
\hline
SIR-CU 	&25.42 &12.72 \\
\hline
\end{tabular}
}
\end{center}
\vspace{-2.5mm}
\end{table}

\begin{table}[t!]
\Newrevis{
\renewcommand{\baselinestretch}{1.17}\small
\caption{Average RMSE of different filters when $R_2 =2R_1$ in the linear scenario when two sensors have correlated measurement noises.}
\label{tab:LinearRMSE_correlated}
\vspace{-3mm}
\begin{center}
{\footnotesize
\begin{tabular}{|c|c|c|}
\hline
Filter & ARMSE position [m]& ARMSE velocity [m/s] \\
\hline
\hline
Noncoop. KF 1 &20.07 &10.79 \\
\hline
KF-IC	&19.13 &10.60\\
\hline
KF-NF	&39.59 &13.15\\
\hline
KF-AA (suboptimal)	&19.60 &10.88\\
\hline
KF-CI 	&19.34 &10.67\\
\hline
KF-CU 	&24.41 &11.96  \\
\hline
Noncoop. SIR 1 &22.98 &11.75 \\
\hline
SIR-IC&21.63 &11.46\\
\hline
SIR-NF	&62.82 &15.54\\
\hline
SIR-AA (suboptimal)	&20.48 &11.22 \\
\hline
SIR-CI 	&21.33&11.19 \\
\hline
SIR-CU 	&26.46 &12.66 \\
\hline
\end{tabular}
}
\end{center}
\vspace{-2.5mm}
}
\end{table}

Further on, the average and variance of the fusion weight assigned to sensor 1 in the KF-AA fusion when $R_2 = 2 R_1$ are given in Fig. \ref{fig:fw_Linear}. The results show that the average fusion weight assigned to sensor 1 is about 0.575 with a small variance, which indicates that the weighting solution \eqref{eq:suboptimalWeightGassuain} indeed makes sense as it assigns a greater weight to the sensor of better quality. {Moreover, we test the performance of different weighting approaches for the AA fusion including the suboptimal weights \eqref{eq:suboptimalWeightGassuain}, the Cov-weights \eqref{eq:weight-Err-Cov-P} in comparison with the ad-hoc, fixed weights $w_1=0.5$ (and so $w_2=0.5$), $w_1=0.6$ (and so $w_2=0.4$) and $w_1=0.8$ (and so $w_2=0.2$) for different $\rho =1,...,10$. The ARMSEs of these filters over all filtering times are given in Fig. \ref{fig:wComp_linear} in terms of the position and velocity estimation, respectively. The results show that the fixed weight using $w_1=0.6$ (and so $w_2=0.4$) performs closely with the suboptimal weights \eqref{eq:suboptimalWeightGassuain}. It is non-surprising since the obtained suboptimal weight is just about $w_1= 0.575$ on average as shown in Fig. \ref{fig:fw_Linear}, which is close to the fixed weight $w_1=0.6$. Some surprisingly, both of them are inferior to the fixed weight $w_1=0.8$ when $\rho \geq 6$ for position estimation and even when $\rho \geq 3$ for velocity estimation. We note such a choice like $0.8$ is ad-hoc and by no means to be the best, or even a good choice in any other scenarios. In fact, it performed the worst in all when $\rho <3$ in terms of position estimation and $\rho \approx 1$ for velocity estimation. On the other hand, the heuristic, error-covariance-based weights \eqref{eq:weight-Err-Cov-P} perform worse than the other weights except for the fixed weight $w_1=0.5$ in most cases. In this sense, it should be abandoned. However, in the case of homogeneous sensors or approximately (namely $\rho\approx 1$), the suboptimal weights \eqref{eq:suboptimalWeightGassuain} perform the best in all, better than all other weights including the best possible fixed weights $w_i=1/I, i=1,...,I$. }

\begin{figure}
\centering
\centerline{\includegraphics[width=12 cm]{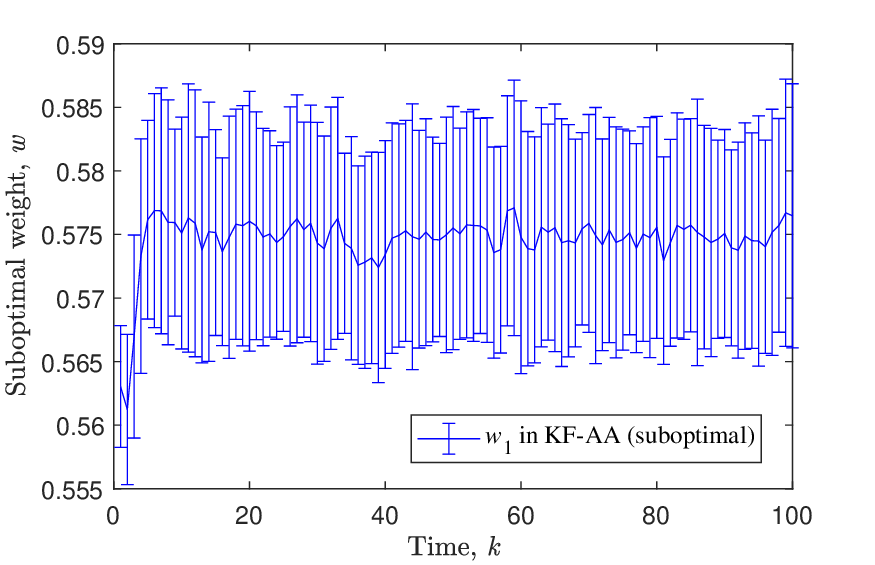}}
\caption{The average and variance of the fusion weight assigned to sensor 1 in the KF-AA fusion when $R_2 =2R_1$ in the linear scenario.} \label{fig:fw_Linear}
\vspace{-2mm}
\end{figure}

\begin{figure}
\centering
\centerline{\includegraphics[width=17 cm]{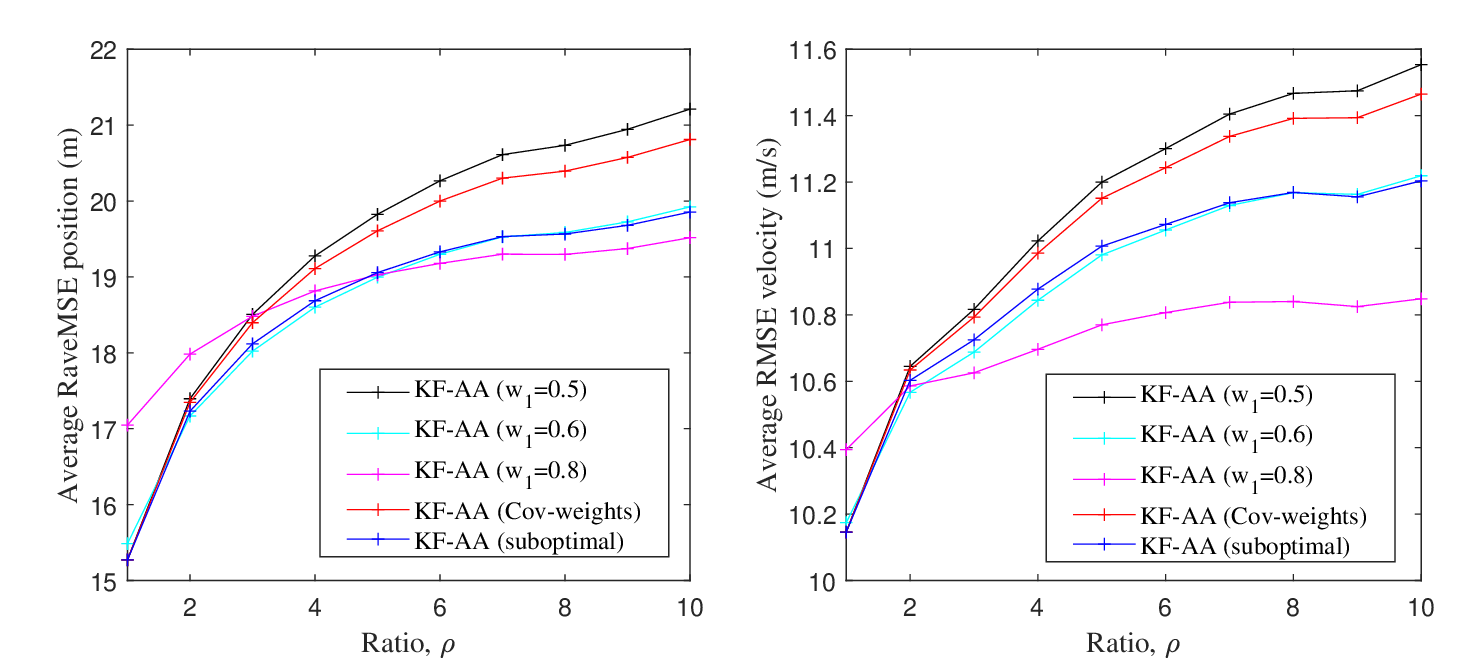}}
\caption{{The ARMSEs of the two-sensor-AA-fusion KFs using different fusing weights against different $\rho$ in the linear scenario with independent measurements among sensors.}} \label{fig:wComp_linear}
\vspace{-2mm}
\end{figure}

\subsection{Nonlinear Scenario}
In the second scenario, the target state is denoted as $\mathbf{x}_k=[p_{x,k},\dot{p}_{x,k},p_{y,k},\dot{p}_{y,k},\omega _{k}]^\text{T}$ with an additional turn rate $\omega _{k}$ as compared with that in the last simulation. It is randomly initialized as follows $\mathbf{x}_0 \sim \mathcal{N}(\mathbf{x};\mathbf{\mu}_0,\mathbf{P}_0)$,
where $\mathbf{\mu}_0 = [1000\textrm{m},20\textrm{m}/\textrm{s},1000\textrm{m},0\textrm{m}/\textrm{s},-\pi/60\textrm{rad}]^\mathrm{T}$ with $\mathbf{Q}=$ and $\mathbf{P}_0 =$ diag$\{[500\textrm{m}^2,50\textrm{m}^2/\textrm{s}^2,500\textrm{m}^2,50\textrm{m}^2/\textrm{s}^2], 0.01\textrm{rad}^2\}$.
The target moves following a coordinated turn model with a
sampling period of $\Delta =1$s and transition density $f_{k|k-1}(\mathbf{x}_{k}|\mathbf{x}_{k-1})=%
\mathcal{N}(\mathbf{x}_{k};F(\omega _{k})\mathbf{x}_{k},\mathbf{Q})${,} where%
\begin{equation}
F(\omega )=\left[
\begin{array}{ccccc}
1 & \!\!\frac{\sin \omega\Delta }{\omega } & 0 & \!\!-\frac{1-\cos  \omega\Delta }{\omega
} & 0 \\
\vspace{0.5mm}
0 & \!\!\cos  \omega\Delta & 0 & \!\!-\sin  \omega\Delta & 0 \\
\vspace{0.5mm}
0 & \!\!\frac{1-\cos  \omega\Delta}{\omega } & 1 & \!\!\frac{\sin  \omega\Delta}{\omega }
& 0 \\
\vspace{0.5mm}
0 & \!\!\sin  \omega\Delta & 0 & \!\!\cos  \omega\Delta & 0 \\
\vspace{0.5mm}
0 & 0 & 0 & 0 & 1%
\end{array}%
\right] \ist, \!\!
\end{equation}
and 
$\mathbf{Q}=\mathrm{diag}([\mathbf{I}_2\otimes\mathbf{G},\sigma _{u}^{2}])$ with
$
\mathbf{G}=%
q_1\left[
\begin{array}{cc}
\vspace{0.8mm}
\frac{\Delta^{3}}{3} & \frac{\Delta^{2}}{2} \\
\vspace{0.8mm}
\frac{\Delta^{2}}{2} & \Delta  %
\end{array}%
\right] \ist, \!\!
$
where $\otimes$ is the Kronecker product, $q_1=0.1$, and $\sigma^2_{u}=10^{-4}\text{rad}/\text{s}$.

Sensor $s \in \{1,2\}$ localized at $[ x_s; y_s]$ generates the range-bearing measurement as
\begin{equation}
\mathbf{y}_{s,k}= \begin{bmatrix}
\sqrt{( p_{x,k} \!-\! x_s )^2 + (p_{y,k} \!-\! y_s)^2} \,\,\\
\vspace{0.5mm}
\tan^{-1}\!\Big( \frac{ p_{x,k} - x_s }{ p_{y,k}  - y_s } \Big)
\end{bmatrix}
+ \begin{bmatrix} r_{s,k} \\[0.5mm] \theta_{s,k}  \end{bmatrix} \ist,
\end{equation}
where $r_{s,k}$ and $\theta_{s,k}$ are zero-mean Gaussian with standard deviation $\sigma_{s,r}$m and $\sigma_{s,\theta}$rad, respectively. They are $x_1=0$m, $y_1=0$m, $\sigma_{1,r}\!=\! 10$m and $\sigma_{1,\theta} = -\pi/180$rad for sensor 1 {while for sensor 2 positioned at $x_2=500$m, $y_2=0$m, we will test different magnitudes of observation noises by setting $[\sigma_{2,r},\sigma_{2,\theta}] = [\rho\sigma_{1,r}, \rho\sigma_{1,\theta}]$ for different ratios $\rho = 1,...,10$. }

Based on the linear and nonlinear scenarios given above, we further investigate the AA fusion approach using for sensor 2 by comparing the proposed suboptimal fusing weights with the some fixed weights specified in advance. That is, we test in the linear scenario and $\sigma_{2,r}= \rho \sigma_{1,r}, \sigma_{2,\theta}= \rho \sigma_{1,\theta}$ for different $\rho = 1,...,10$ in the nonlinear scenario, respectively.

\begin{figure}
\centering
\centerline{\includegraphics[width=12 cm]{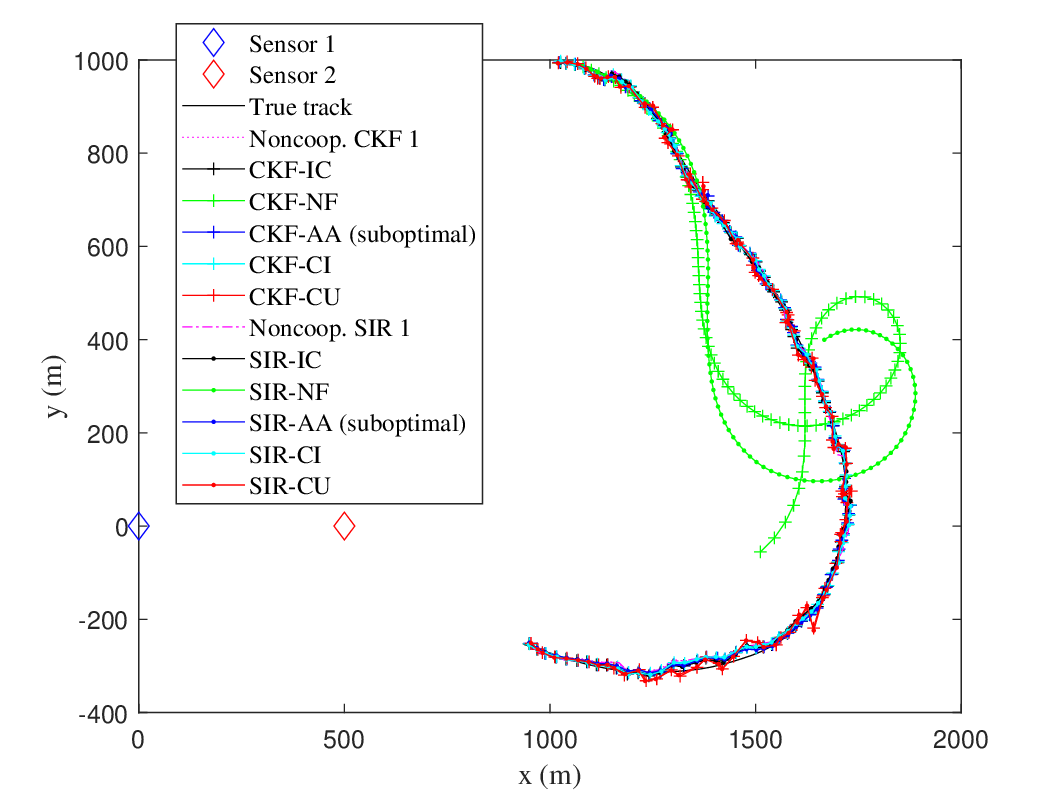}}
\caption{The position of the two sensors, the ground truth of the target trajectory, the estimates of the noncooperative filters and their two-sensor-fusion results in one trial of the nonlinear scenario.} \label{fig:Trajectory_nonLinear}
\vspace{-2mm}
\end{figure}

\begin{figure}
\centering
\centerline{\includegraphics[width=12 cm]{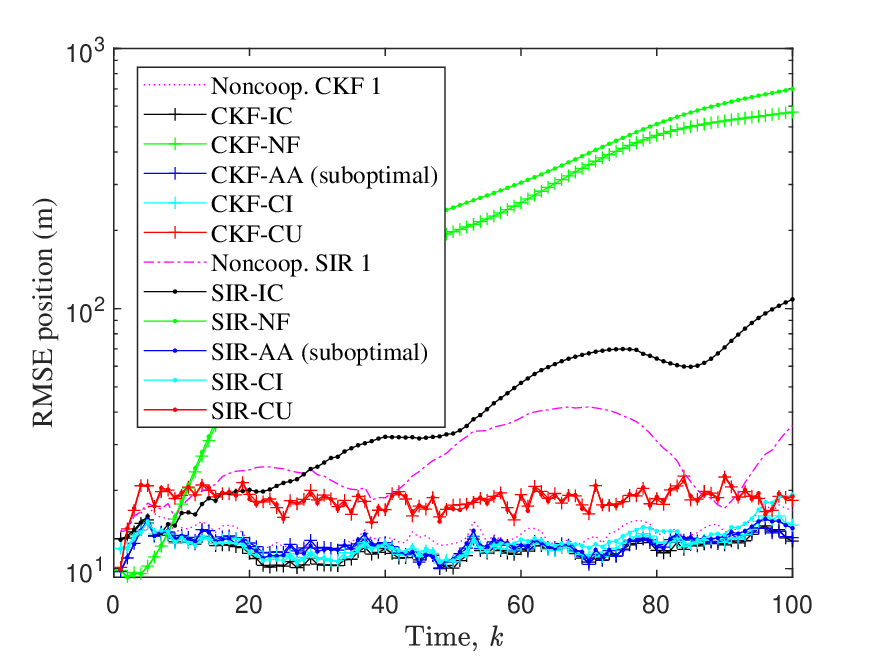}}
\caption{The position RMSEs of noncooperative and two-sensor-fusion filters in the nonlinear scenario.} \label{fig:posRMSE_nonLinear}
\vspace{-2mm}
\end{figure}

\begin{figure}
\centering
\centerline{\includegraphics[width=12 cm]{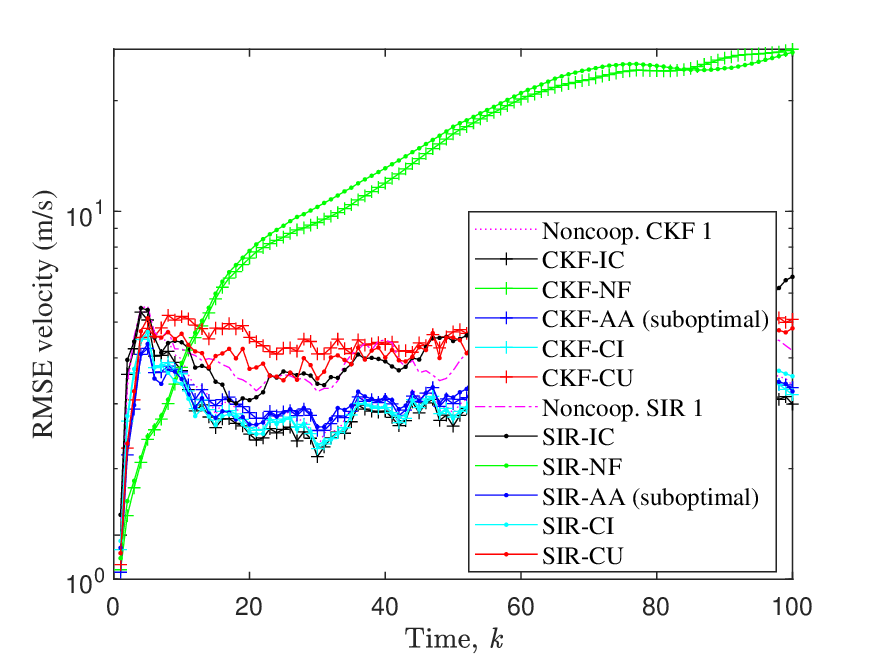}}
\caption{The velocity RMSEs of noncooperative and two-sensor-fusion filters in the nonlinear scenario.} \label{fig:velRMSE_nonLinear}
\vspace{-2mm}
\end{figure}

\begin{figure}
\centering
\centerline{\includegraphics[width=12 cm]{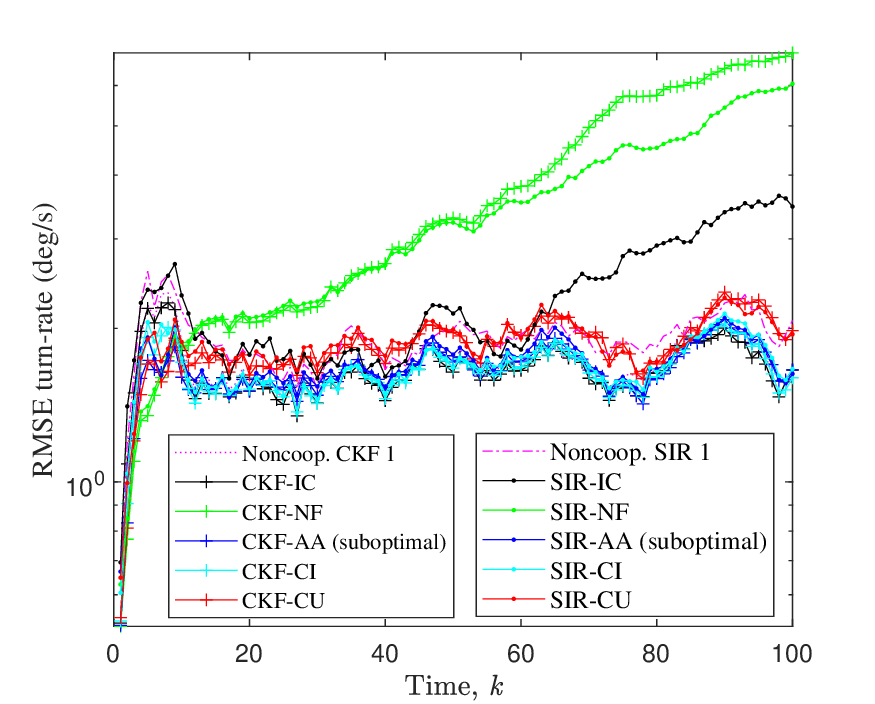}}
\caption{The turn-rate RMSEs of noncooperative and two-sensor-fusion filters in the nonlinear scenario.} \label{fig:omeRMSE_nonLinear}
\vspace{-2mm}
\end{figure}

\begin{figure}
\centering
\centerline{\includegraphics[width=12 cm]{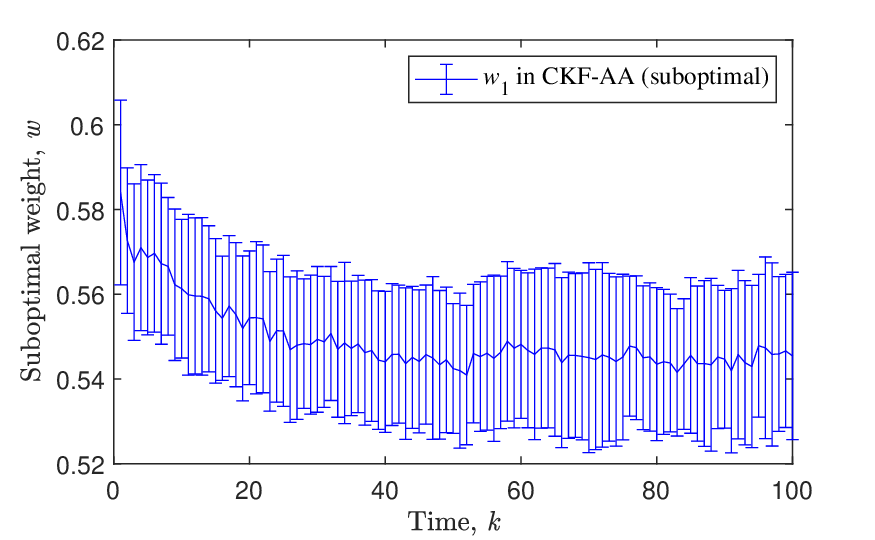}}
\caption{The average and variance of the fusion weight assigned to sensor 1 in the CKF-AA fusion in the nonlinear scenario.} \label{fig:fw_nonLinear}
\vspace{-2mm}
\end{figure}

The trajectory of the target, as well as the estimates of the noncooperative and two-sensor-fusion CKF and SIR filters, in one trial is given in Fig. \ref{fig:Trajectory_nonLinear}. The RMSEs of these noncooperative and two-sensor-fusion filters are given in Fig. \ref{fig:posRMSE_nonLinear}, Fig. \ref{fig:velRMSE_nonLinear} and Fig. \ref{fig:omeRMSE_nonLinear} in terms of the position, velocity and turn-rate estimation, respectively. The ARMSEs over all filtering times are given in Table \ref{tab:nonLinearRMSE}. These results are consistent with those in the linear scenarios. As shown in Fig. \ref{fig:Trajectory_nonLinear}, the estimates of CKF/SIR-NF filters corrupt as they diverge significantly from the real trajectory, leading to an average performance that is to a large degree worse than the others. The failure of the NF in this scenario just exposes the risk of inconsistent/over-positive fusion. 
The AA and CI fusion-based filters perform the best in whether CKF or SIR filters, close to the IC-CKF filters. Differing from what shown in the last simulation, the SIR filters perform slightly better than the CKF filters except the SIR-IC which performs much worse than the CKF-IC filter and even the SIR-CU filters. This is possibly because the joint likelihood \eqref{eq:jointLikelihood} is very informative which can easily cause particle degeneracy \citep{Li15SPM} and so deteriorate the filter. Both AA and CI fusion are free of this problem. The average and variance of the fusion weight assigned to sensor 1 in the CKF-AA fusion are given in Fig. \ref{fig:fw_nonLinear}. It shows that 
the average fusing weight assigned to sensor 1 gradually reduces from $0.584$ to $0.541$, implying that sensor 1 has been reasonably assigned with a greater fusion weight. 
The slight reduction of the fusing weight may due to that both filters perform closer with each other with the number of fusion iterations carried out between them. 


\begin{table}[t!]
 \vspace{-2.5mm}
\renewcommand{\baselinestretch}{1.17}\small
\caption{Average RMSE of different filters in the nonlinear scenario}
\label{tab:nonLinearRMSE}
\vspace{-3mm}
\begin{center}
{\footnotesize
\begin{tabular}{|c|c|c|c|}
\hline
Filter & Position [m]& Velocity [m/s] & Turn-rate [deg/s] \\
\hline
\hline
Noncoop. CKF 1	&13.80 &3.20 &1.69\\
\hline
CKF-IC	&11.94 &2.97 &1.66\\
\hline
CKF-NF	&247.49&2 15.76 &3.74\\
\hline
CKF-AA (suboptimal)	&12.42 &3.10 &1.66 \\
\hline
CKF-CI 	&12.39 &3.02 &1.95\\
\hline
CKF-CU 	&18.47 & 4.53 &1.84\\
\hline
Noncoop. SIR 1 &26.80 &4.11 &1.91\\
\hline
SIR-IC &44.40 &4.49 &2.29 \\
\hline
SIR-NF	&286.29 &16.20 &3.35 \\
\hline
SIR-AA (suboptimal)	&12.54 & 3.13 &1.71 \\
\hline
SIR-CI 	&12.81 &3.09 &1.69 \\
\hline
SIR-CU 	&18.43 &4.18 &1.88\\
\hline
\end{tabular}
}
\end{center}
\vspace{-2.5mm}
\end{table}

{Similar with the case of the linear scenario, we test the performance of different weight approaches for the AA fusion in the case of different ratios $\rho =1,...,10$. The ARMSEs of these filters over all filtering times are given in Fig. \ref{fig:wComp_nonlinear} in terms of the position and velocity estimation, respectively. 
Different from the case of the linear scenario, the performance of the AA fusion using Cov-weights overlaps with that of the fixed weights $w_1=w_2=0.5$. This indicates that the result of \eqref{eq:weight-Err-Cov-P} is close to $0.5$, i.e., the covariances obtained at two filters are very close to each other in terms of their traces. However, both of them perform the worst in all except for the fixed weight $w_1=0.8$ which performs the worst when $\rho\leq5$ for position estimation and when $\rho\leq2$ for velocity estimation. The suboptimal weights \eqref{eq:suboptimalWeightGassuain} perform similar again as the fixed weight using  $w_1=0.6$, superior to the fixed weights $w_1=0.8$ when $\rho<8$ for position estimation and when $\rho<4$ for velocity estimation. But the fixed weight $w_1=0.8$ performs better when using a larger $\rho$ (namely a significant heterogenous sensing case), i.e., Sensor 2 suffers from much more significant noises as compared with Sensor 1. In the case of homogeneous sensors, the suboptimal weights \eqref{eq:suboptimalWeightGassuain} perform the best of all, better than any fixed weights or the Cov-weights again. }

\begin{figure}
\centering
\centerline{\includegraphics[width=17 cm]{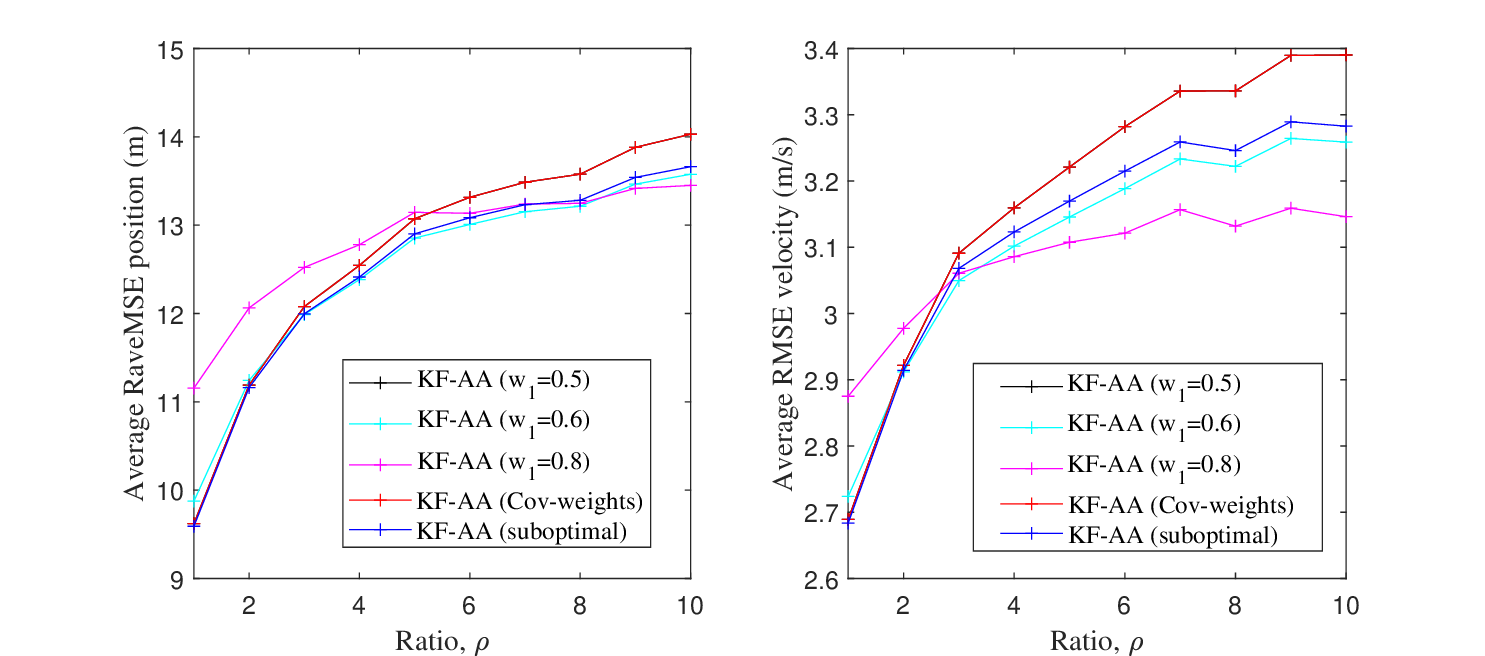}}
\caption{{The ARMSEs of the two-sensor-AA-fusion KFs using different fusing weights against different $\rho$ in the nonlinear scenario.}} \label{fig:wComp_nonlinear}
\vspace{-2mm}
\end{figure}


\section{Conclusion and Future Work} \label{sec:conclusion}
In this paper, we present some statistic and information-theoretic results on the fundamental AA density fusion, analyzing its covariance consistency, mean square error,
mode-preservation capacity, and inner relationship with some existing conservative fusion approaches including covariance union and covariance intersection. An information-theoretic, suboptimal method is proposed for online determining the fusing weights of the AA fusion approach based on the min-max optimization framework. \Newrevis{The resulted AA fusion seeks not only conservativeness but also accuracy. We may refer to this family of fusion which seeks good performance while ensuring the robustness as reliable fusion.} Representative scenarios and simulation study based on the benchmark Kalman/particle filters and GM models have been considered for verifying these theoretical findings {and for demonstrating both the effectiveness and weakness of our proposed suboptimal weighting solution.} These results may promote the conservative fusion of other advanced filters and of multi-object densities.
{However, the proposed suboptimal fusion weights ignore any a-priori knowledge about the fusing sensors and specific requirements when a large number of sensors are involved.  
For example, in the case of distributed sensor networks, the fusion weights have to take into account issues like network convergence and communication efficiency. When the sensors have significantly different qualities or different sensing frequencies, the fusion weights need to take into account them too. Moreover, it is even more challenging to properly take into account the quality of the information in fusion while maintain a high fusing convergence rate in the distributed settings. These form valuable future research. }

\Newrevis{
\section{Appendix}
\subsection[A]{Proof of Lemma \ref{lemma_AAmeanVar}} \label{Append-MSE}
}
The proof is straightforward as follows
 \begin{align}
  \mathrm{MSE}_{\hat{\mathbf{x}}_\mathrm{AA}} 
   = & \mathrm{E}_p \Big[\big(\mathbf{x}- \sum_{i \in \mathcal{I}} w_i\hat{\mathbf{x}}_i \big) \big(\cdot\big)^\mathrm{T}\Big]  \nonumber \\
= & \sum_{i \in \mathcal{I}} w_i^2 \mathrm{MSE}_{\hat{\mathbf{x}}_i} + \sum_{i<j \in \mathcal{I}} 2 w_i w_j \mathrm{MCE}_p(\hat{\mathbf{x}}_i,\hat{\mathbf{x}}_j)
 \label{eq:AA-mse-unbias} \\
 = & \sum_{i \in \mathcal{I}} w_i^2 \mathrm{MSE}_{\hat{\mathbf{x}}_i}  \label{eq:AA-mse-uncor} \\  
 \preceq & \sum_{i \in \mathcal{I}} w_i \mathrm{MSE}_{\hat{\mathbf{x}}_i} \ist, \label{eq:AA-mse-leq}   
\end{align}
where \eqref{eq:independent} was used in \eqref{eq:AA-mse-uncor}. 

\Newrevis{
To demonstrate \eqref{eq:AA-mse-unbias}-\eqref{eq:AA-mse-leq}, we consider the following statistical testing. In each trial, the real state $x$ is drawn from a 1-dimensional Gaussian distribution with PDF $p(x)=\mathcal{N}(x; 0, 4)$. Assume a number of estimators that are noisy observers of the real state, i.e., $\hat{x}_i = x + v_i, i \in \mathcal{I}$ where $v_i$ is a Gaussian noise $v_i\sim\mathcal{N}(x; 0, 1)$. 
The noises for these estimators are correlated with each other in such a way that they share a common part. That is, the noise $v_i$ is composed of two independent parts $v_i = (1-c)u_i + cu_0$, where $0\leq c \leq 1$ and $u_0, u_i\sim\mathcal{N}(x; 0, 1), i\in \mathcal{I}$ are independent of each other, namely $\mathrm{E}(u_iu_j)=0, \forall i\neq j$ but $u_0$ is the same for all estimators in each trial. 
Obviously, a larger $c \in [0, 1]$ implies a higher degree of correlation between the estimators, $c=0$ implying non-correlation and $c=1$ implying the identical estimate. Two types of fusing weights are considered: normalized uniform and random weights. In the former $w_i$ equals $1/|\mathcal{I}|$ while in the latter it is uniformly distributed in $[0, 1)$ and normalized.
The results of 10000 monte carlo trials are given in Fig. \ref{fig:MSE} for different sizes of $|\mathcal{I}|$. It shows that \eqref{eq:AA-mse-uncor} holds for $c=0$ and the equation in \eqref{eq:AA-mse-leq} holds for $c=1$ in both cases of fusing weights, regardless of the minor Monte Carlo error.
}

\begin{figure*}
\centering
\includegraphics[width=14 cm]{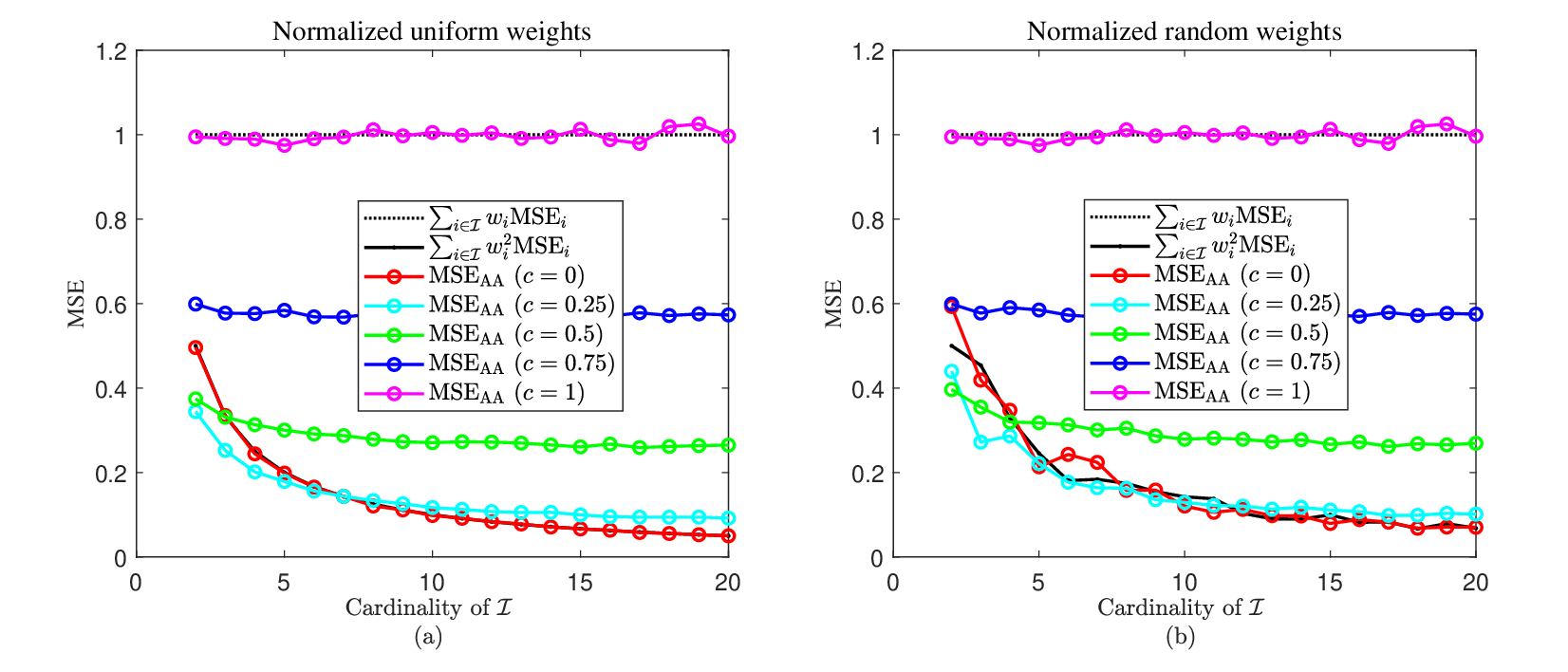}
\caption{\Newrevis{MSE of the AA of multiple estimators that are uncorrelated or correlated in different degrees. 
(a): The AA fusion using normalized uniform weights.
(b): The AA fusion using normalized random weights.
}}
\label{fig:MSE}
\vspace{-2mm}
\end{figure*}

\Newrevis{
\subsection[A]{Conservation Fusion Method Comparison: Case Study} \label{Append-model-presevation}
}
Four representative scenarios are illustrated in (a), (b), (c) and (d) of Fig.~\ref{fig:naiveCIpkAA}, respectively, where
the two Gaussian densities to be fused are visualized by two ellipses P1 and P2 that characterize the respective covariances, 
with the center points indicating the respective means. 
We consider the aforementioned fusion schemes: the NF/naive, 
un-weighted GA fusion (i.e., \eqref{eq:CI-x} and \eqref{eq:CI-P} using $w_1=w_2=0.5$), CI fusion (i.e., \eqref{eq:CI-x} and \eqref{eq:CI-P} using optimized weight as \eqref{eq:CI-w} which results in $w_1=0.3764, w_2=0.6236$ in this case), un-weighted AA fusion (i.e., \eqref{eq:AA-v-fusion} and \eqref{eq:AA-f-P} using $w_1=w_2=0.5$) with and without component merging, and two bounds of the CU fusion with fused covariance given as in \eqref{eq:CU_P-max} (referred to as CU max) and in \eqref{eq:CU_P-min} (referred to as CU min), respectively.
\Newrevis{What has been shown in blue is the merged result, which is only reasonable when the mixands are close, like in (a), but not in (d).
These results {illustrate} \eqref{eq:AAvsCU} and \eqref{eq:conservationfusionchain}.}

In all four scenarios, without the knowledge of the true target position {or the true target state probability distribution}, 
we cannot tell whether any of the seven fusion schemes is better than the others, no matter their estimate is given by the EAP or MAP.
Even in scenario (a) there is no guarantee that the target 
is localized in the intersection of P1 and P2; if it is not, then both the NF and GA/CI fusion will likely produce incorrect results. 
In scenarios (c) and (d), at least one of P1 and P2 is inconsistent {in the sense that there is a large offset from the target position
wherever the target is}. This is simply because {the statistical unbiasedness of an estimator does not necessarily mean zero or even small offset of its estimate from the truth at any particular time when the fusion is performed, i.e.,} $\mathrm{E} [ \hat{\mathbf{x}}_i] = \theta \nRightarrow \hat{\mathbf{x}}_i  = \theta$ for any $i \in \mathcal{I}$.
In these cases, the AA fusion may not merge two densities to one but keep a multimodal FMD to avoid producing inconsistent or incorrect results; in the time-series filtering applications, new data will help identify the false/inconsistent components and prune them. 

\begin{figure*}
\centering
\includegraphics[width=17 cm]{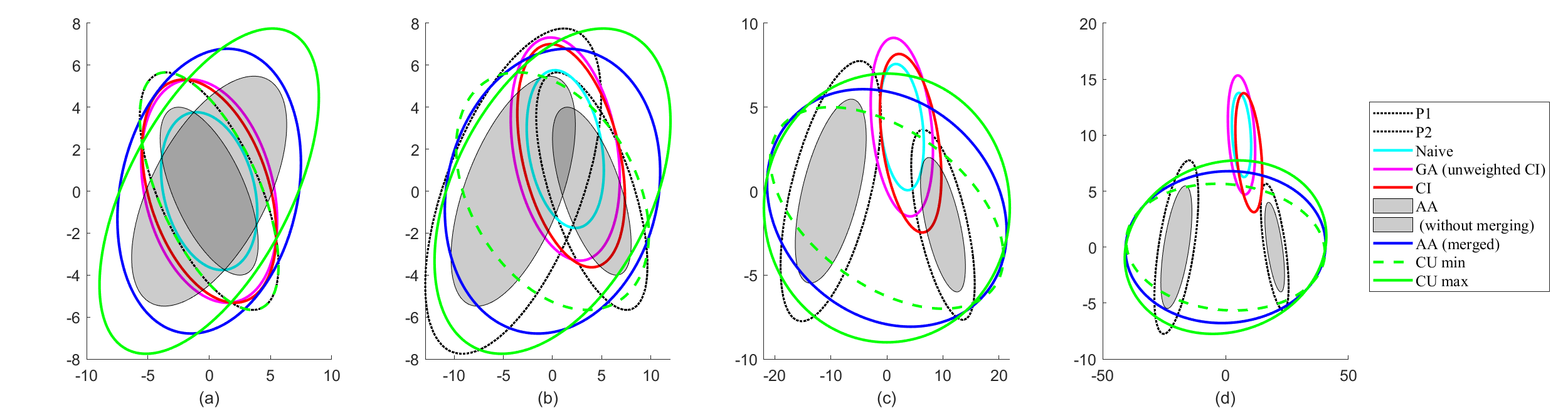}
\caption{Fusing two Gaussian densities having four different levels of divergences by different fusion methods. 
(a): two densities overlap largely and both estimators are likely to be conservative.
(b): two densities are offset from each other but still overlap somehow.
(c) and (d): two densities are greatly offset from each other and at most one estimator is conservative.
}
\label{fig:naiveCIpkAA}
\vspace{-2mm}
\end{figure*}

\Newrevis{
\subsection[A]{Proof of Lemma \ref{lemma:mid-dist}} \label{Append-KLD}
Assuming that the weight constraint $\mathbf{w}^\mathrm{T}\mathbf{1}_I = 1$ holds, we define
\begin{align}
  D_\text{KL}^{-j} & \triangleq \frac{\sum_{i \neq j} w_i  D_\text{KL}( {f_i}\| f_\text{AA})}{1-w_j} \ist, \label{eq:def_KLDpartial}
\end{align}
It is obvious that given fusing weight as satisfying \eqref{eq:AA-MaxMin-stationaryPoint}, we have
\begin{align}
  D_\text{KL}( {f_j}\| f_\text{AA}) &= D_\text{KL}^{-j} \ist, \label{eq:KLDeq}
\end{align}
The condition \eqref{eq:AA-MaxMin-stationaryPoint} also indicates that $ 0<w_j<1$ and so we can further define
\begin{align}
  {f_{-j}}(\mathbf{x}) & \triangleq \frac{{f_\text{AA}}(\mathbf{x})-w_j{f_{j}}(\mathbf{x})}{1-w_j} \ist, \label{eq:def_fpartial}
\end{align}
}and obtain
\begin{align}
  \int_{\mathbb{R}^{d}} {f_{-j}}(\mathbf{x}) d\mathbf{x}  &  = 1 \ist. \label{eq:int-f-j=0}
\end{align}


By writing $f_\text{AA}(\mathbf{x}) = w_j{f_{j}}(\mathbf{x}) + (1-w_j){f_{-j}}(\mathbf{x})$, we have
\begin{equation}
\frac{ \partial \log  {f_\text{AA}}(\mathbf{x}) }{\partial w_j}  = \frac{ {f_j}(\mathbf{x})-{f_{-j}}(\mathbf{x}) }{f_\text{AA}(\mathbf{x}) } \label{eq:Leibniz-rule}
\end{equation}

Moreover, based on \eqref{eq:AA-MaxMin-stationaryPoint}, it is easy to derive the partial derivative of the optimization function given in \eqref{eq:entropyMax1} 
\begin{align}
 \frac{ \partial \sum_{i \in \mathcal{I}} w_i  D_\text{KL}( {f_i}\| f_\text{AA}) }{\partial w_j} 
=& D_\text{KL}( {f_j}\| f_\text{AA}) - D_\text{KL}^{-j}
 + \sum_{i \in \mathcal{I}} w_i \frac{ \partial  D_\text{KL}( {f_i}\| f_\text{AA}) }{\partial w_j}  \\
=&  - \sum_{i \in \mathcal{I}} w_i  \frac{ \partial \int_{\mathbb{R}^{d}} { {f_i}(\mathbf{x})\log  {f_\text{AA}}(\mathbf{x}) \delta \mathbf{x}}  }{\partial w_j} \label{eq:KLDpartial} \\
=&  - \sum_{i \in \mathcal{I}} w_i  \int_{\mathbb{R}^{d}} { \frac{{f_i}(\mathbf{x})\big({f_j}(\mathbf{x})-{f_{-j}}(\mathbf{x})\big)}{f_\text{AA}(\mathbf{x}) }\delta \mathbf{x}} \label{eq:Leibniz-rule-callback}  \\
=& - \int_{\mathbb{R}^{d}} \big({f_j}(\mathbf{x})-{f_{-j}}(\mathbf{x})\big)\delta \mathbf{x} \nonumber \\
=& 0 \label{eq:deri=0}  \ist
\end{align}
where \eqref{eq:KLDeq} was used in \eqref{eq:KLDpartial}, \eqref{eq:Leibniz-rule} and the Leibniz integral rule were used in \eqref{eq:Leibniz-rule-callback}, and \eqref{eq:int-f-j=0} was used in \eqref{eq:deri=0}.

Further on, $\frac{ \partial^2 \sum_{i \in \mathcal{I}} w_i  D_\text{KL}( {f_i}\| f_\text{AA}) }{\partial w_j^2}  = - \frac{\sum_{i \neq j} w_i  D_\text{KL}( {f_i}\| f_\text{AA})}{(1-w_j)^2} \leq 0 $ where the last equation holds if and only if $f_i (\mathbf{x})= f_j(\mathbf{x}), \forall i \neq j$.
So, this lemma is proven.

\section*{Acknowledgment}
This work was partially supported 
 by National Natural Science Foundation of China (Grant No. 62071389), Natural Science Basic Research Program of Shaanxi Province (Program No. 2023JC-XJ-22), 
 Key Laboratory Foundation of National Defence Technology (No. JKWATR-210504) and the Fundamental Research Funds for the Central Universities.


\bibliographystyle{model1-num-names} 
\bibliography{SomeResults}

\end{document}